\numberwithin{equation}{section}  
\begin{document}



\newcommand{\zxz}[4]{\begin{pmatrix} #1 & #2 \\ #3 & #4 \end{pmatrix}}
\newcommand{\abcd}{\zxz{a}{b}{c}{d}}
\newcommand{\kzxz}[4]{\left(\begin{smallmatrix} #1 & #2 \\ #3 &
#4\end{smallmatrix}\right) }
\newcommand{\kabcd}{\kzxz{a}{b}{c}{d}}




\newcommand{\A}{{\mathbb A}}
\newcommand{\C}{{\mathbb C}}
\newcommand{\F}{{\mathbb F}}
\newcommand{\G}{{\mathbb G}}
\newcommand{\R}{{\mathbb R}}
\newcommand{\Q}{{\mathbb Q}}
\newcommand{\X}{{\mathbb X}}
\newcommand{\Z}{{\mathbb Z}}
\newcommand{\HZ}{\widehat{\Z}}


\newcommand{\rom}[1]{\text{\rm #1}}
\renewcommand{\roman}{\rm}

\newcommand{\Aut}{\text{\rm Aut}}
\newcommand{\CH}{\widehat{\text{\rm CH}}}
\newcommand{\cha}{{\text{\rm char}}}
\newcommand{\CHe}{\text{\rm CHeeg}}
\newcommand{\degh}{\widehat{\text{\rm deg}}}
\newcommand{\degH}{\widehat{\text{\rm deg}}}    
\newcommand{\diag}{{\text{\rm diag}}}
\newcommand{\Diff}{\text{\rm Diff}}
\newcommand{\disc}{\text{\rm discr}}
\renewcommand{\div}{\text{\rm div}}
\newcommand{\divh}{\widehat{\text{\rm div}}}
\newcommand{\DS}{\text{\rm DS}}
\newcommand{\Ei}{\text{\rm Ei}}
\newcommand{\End}{\text{\rm End}}
\newcommand{\ev}{{\text{\rm ev}}}
\newcommand{\Gal}{\text{\rm Gal}}
\newcommand{\GL}{\text{\rm GL}}
\newcommand{\GSpin}{\text{\rm GSpin}}
\newcommand{\Hom}{\text{\rm Hom}}
\newcommand{\hor}{{\text{\rm horiz}}}
\newcommand{\id}{\text{\rm id}}
\newcommand{\im}{\text{\rm im}}
\renewcommand{\Im}{\text{\rm Im}}
\newcommand{\inv}{{\text{\rm inv}}}
\newcommand{\Jac}{\text{\rm Jac}}
\newcommand{\Leray}{{\mathrm L}}
\newcommand{\Lie}{\text{\rm Lie}}
\newcommand{\Mp}{\text{\rm Mp}}
\newcommand{\mult}{\text{\rm mult}}
\newcommand{\MW}{\text{\rm MW}}
\newcommand{\MWt}{\widetilde{\MW}}
\newcommand{\new}{\text{\rm new}}
\newcommand{\Nm}{\text{\rm Nm}}
\newcommand{\ord}{\text{\rm ord}}
\newcommand{\PGL}{\text{\rm PGL}}
\newcommand{\Pic}{\text{\rm Pic}}
\newcommand{\Pich}{\widehat{\text{\rm Pic}}}
\newcommand{\pr}{\text{\rm pr}}
\newcommand{\ra}{\text{\rm ra}}
\newcommand{\Rao}{\mathrm R}
\renewcommand{\Re}{\text{\rm Re}}
\newcommand{\sgn}{\text{\rm sgn}}
\newcommand{\sig}{\text{\rm sig}}
\newcommand{\SL}{\text{\rm SL}}
\newcommand{\SO}{\text{\rm SO}}
\newcommand{\Sp}{\text{\rm Sp}}
\newcommand{\Spec}{\text{\rm Spec}\, }
\newcommand{\Spf}{\text{\rm Spf}}
\newcommand{\supp}{\text{\rm supp}}
\newcommand{\Sym}{{\text{\rm Sym}}}
\newcommand{\tr}{\text{\rm tr}}
\newcommand{\type}{\text{\rm type}}
\newcommand{\Ver}{\text{\rm Vert}}
\newcommand{\vol}{\text{\rm vol}}
\newcommand{\Wald}{\text{\rm Wald}}


\newcommand{\Cal}{\mathcal}     

\newcommand{\AHH}{\hat{\Cal A}}   
\newcommand{\CHH}{\hat{\Cal C}}
\newcommand{\MM}{\Cal D}          
\newcommand{\MMb}{\MM^\bullet}
\newcommand{\ssplit}{\text{\bf split}}
\newcommand{\whcc}{\widehat{\Cal C}}
\newcommand{\CO}{\mathcal O}
\newcommand{\COH}{\widehat{\CO}}
\newcommand{\M}{\Cal M}
\newcommand{\OB}{\Cal O_B}
\newcommand{\XX}{\mathcal X}
\newcommand{\bXX}{\bar\XX}
\newcommand{\wc}{\hat{\Cal C}}
\newcommand{\wch}{\wc^{\text{\rm hor}}}
\newcommand{\ZZ}{\Cal Z}
\newcommand{\ZH}{\widehat{\Cal Z}}   
\newcommand{\Zh}{\widehat{\Cal Z}}
\newcommand{\ZZh}{\ZZ^{\text{\rm hor}}}
\newcommand{\ZZv}{\ZZ^{\text{\rm ver}}}
\newcommand{\ZZhh}{\Zh^{\text{\rm hor}}}
\newcommand{\ZZhv}{\Zh^{\text{\rm ver}}}


\newcommand{\nass}{\noalign{\smallskip}}
\newcommand{\snass}{\noalign{\vskip 2pt}}
\newcommand{\tent}[1]{ \vphantom{\vbox to #1pt{}} }   


\newcommand{\scr}{\scriptstyle}
\newcommand{\disp}{\displaystyle}

\font\cute=cmitt10 at 12pt
\font\smallcute=cmitt10 at 9pt
\newcommand{\kay}{{\text{\cute k}}}
\newcommand{\smallkay}{{\text{\smallcute k}}}

\renewcommand{\a}{\alpha}
\renewcommand{\b}{\beta}
\newcommand{\e}{\epsilon}
\renewcommand{\l}{\lambda}
\renewcommand{\L}{\Lambda}
\renewcommand{\o}{\omega}
\renewcommand{\O}{\Omega}
\renewcommand{\P}{\Phi}
\newcommand{\ph}{\varphi}
\newcommand{\phih}{\widehat{\phi}}
\newcommand{\wphi}{\widehat{\phi}}
\newcommand{\phit}{\widetilde{\phi}}
\newcommand{\s}{\sigma}
\newcommand{\vth}{\vartheta}


%

\newcommand{\Pt}{P}
\newcommand{\Ph}{\P}
\newcommand{\Pht}{\tilde \P}   
\newcommand{\Kt}{K}           
\newcommand{\Mt}{M}

\newcommand{\pht}{\widetilde{\phi}}
\newcommand{\It}{I}
\newcommand{\Jt}{\widetilde{J}}
\newcommand{\lt}{\widetilde{\l}}
\newcommand{\vp}{\varpi}

\newcommand{\bom}{{\boldsymbol{\o}}}
\newcommand{\hbom}{\widehat{\bom}}
\newcommand{\ff}{{\bold f}}
\newcommand{\fsp}{\boldsymbol{f}_{\!\rm sp}}
\newcommand{\fev}{\boldsymbol{f}_{\!\rm ev}}
\newcommand{\fb}{\boldsymbol{f}}
\newcommand{\J}{\und{J}'}
\newcommand{\JJ}{\bold J'}
\newcommand{\V}{\bold V}
\newcommand{\xx}{\bold x}

\newcommand{\g}{{\mathfrak g}}
\renewcommand{\H}{\mathfrak H}


\newcommand{\back}{\backslash}
\newcommand{\CT}[1]{\operatornamewithlimits{CT}_{#1}}
\renewcommand{\d}{\partial}
\newcommand{\db}{\bar\partial}
\newcommand{\dbar}{\bar{\partial}}
\newcommand{\gs}[2]{\langle \,#1,#2\,\rangle}
\newcommand{\Gt}{G}
\newcommand{\hfal}{h_{\text{\rm Fal}}}
\newcommand{\II}{\int^\bullet}
\newcommand{\isoarrow}{\ {\overset{\sim}{\longrightarrow}}\ }
\newcommand{\lisoarrow}{\ {\overset{\sim}{\longleftarrow}}\ }
\newcommand{\limdir}[1]{\underset{\underset{#1}{\rightarrow}}{\lim}}
\newcommand{\lan}{\operatorname{\langle}\hskip .5pt}
\newcommand{\ran}{\,\operatorname{\rangle}}
\newcommand{\lra}{\longrightarrow}
\newcommand{\doublelra}{\ {\overset{\scr\lra}{\scr\lra}}\ }
\newcommand{\nat}{\natural}
\newcommand{\notmid}{\mkern-5mu\not\mkern5mu\mid}
\newcommand{\Optoc}{\text{\rm Opt}(O_{c^2d},O_B)}
\newcommand{\psim}{\psi^{-}}
\newcommand{\qeq}{\ \overset{??}{=}\ }
\newcommand{\sh}{\sharp}
\newcommand{\thCH}{\theta^{\text{\rm ar}}}
\newcommand{\wht}{\widehat{\theta}}     
\newcommand{\triv}{1\!\!1}
\renewcommand{\tt}{\otimes}
\newcommand{\und}[1]{\underline{#1}}
\newcommand{\z}{z}  

\newcommand{\thMW}{\theta^{\text{\rm ar}}}
\newcommand{\tph}{\widetilde{\widehat\phi_1}}
\newcommand{\Pet}{\text{\rm Pet}}





\newcommand{\thing}{ \raisebox{-6.4pt}{$\tilde{\tilde{}}$}  }   
\newcommand{\smallthing}{ \raisebox{-4.4pt}{$\scr\tilde{\tilde{}}$}  }
\newcommand{\ttilde}[1]{\overset{\smash{\thing}}{#1}}
\newcommand{\smallttilde}[1]{\overset{\smash{\smallthing}}{#1}}
\newcommand{\downhookarrow}{\hbox{$\downarrow\hskip -6.1pt\raisebox{6pt}{$\cap$}$}}


\providecommand{\bysame}{\makebox[3em]{\hrulefill}\thinspace}   
\newcommand{\hfb}{\hfill\break}
\newcommand{\margincom}[1]{\marginpar{\bf\raggedright #1}}
\newcommand{\Sec}{\S}


\numberwithin{equation}{section}
\setcounter{section}{0}
\setcounter{MaxMatrixCols}{15}


\newtheorem{theo}{Theorem}[section]
\newtheorem{lem}[theo]{Lemma}
\newtheorem{prop}[theo]{Proposition}
\newtheorem{cor}[theo]{Corollary}
\newtheorem*{atheo}{Theorem A}
\newtheorem*{btheo}{Theorem B}
\newtheorem{conj}[theo]{Conjecture}
\newtheorem{rem}[theo]{Remark}      
\newtheorem{defn}[theo]{Definition}

\newcommand{\E}{{\mathbb E}}

\newcommand{\OO}{\text{\rm O}}
\newcommand{\UU}{\text{\rm U}}

\newcommand{\OK}{O_{\smallkay}}
\newcommand{\DI}{\mathcal D^{-1}}

\newcommand{\pre}{\text{\rm pre}}

\newcommand{\Bor}{\text{\rm Bor}}
\newcommand{\Rel}{\text{\rm Rel}}
\newcommand{\rel}{\text{\rm rel}}
\newcommand{\Res}{\text{\rm Res}}
\newcommand{\TG}{\widetilde{G}}

\newcommand{\OL}{O_{\Lambda}}
\newcommand{\OLB}{O_{\Lambda,B}}

\newcommand{\p}{\varpi}

\newcommand{\cutter}{\vskip .1in\hrule\vskip .1in}

\parindent=0pt
\parskip=10pt
\baselineskip=14pt

\newcommand{\PP}{\mathcal P}
\renewcommand{\OO}{\mathcal O}
\newcommand{\BB}{\mathbb B}
\newcommand{\OBB}{O_{\BB}}
\newcommand{\Max}{\text{\rm Max}}
\newcommand{\Opt}{\text{\rm Opt}}
\newcommand{\OH}{O_H}

\newcommand{\phhat}{\widehat{\phi}}
\newcommand{\thetahat}{\widehat{\theta}}

\newcommand{\lbold}{\text{\boldmath$\l$\unboldmath}}
\newcommand{\abold}{\text{\boldmath$a$\unboldmath}}
\newcommand{\cbold}{\text{\boldmath$c$\unboldmath}}
\newcommand{\aabold}{\text{\boldmath$\a$\unboldmath}}
\newcommand{\gbold}{\text{\boldmath$g$\unboldmath}}
\newcommand{\obold}{\text{\boldmath$\o$\unboldmath}}
\newcommand{\fbold}{\text{\boldmath$f$\unboldmath}}
\newcommand{\rbold}{\text{\boldmath$r$\unboldmath}}
\newcommand{\ffbold}{\und{\fbold}}

\newcommand{\deltaBB}{\delta_{\BB}}
\newcommand{\kappaBB}{\kappa_{\BB}}
\newcommand{\aboldBB}{\abold_{\BB}}
\newcommand{\lboldBB}{\lbold_{\BB}}
\newcommand{\gboldBB}{\gbold_{\BB}}
\newcommand{\bbold}{\text{\boldmath$\b$\unboldmath}}

\newcommand{\fff}{\phi}

\newcommand{\spp}{\text{\rm sp}}

\newcommand{\pob}{\mathfrak p_{\bold o}}
\newcommand{\kob}{\mathfrak k_{\bold o}}
\newcommand{\gob}{\mathfrak g_{\bold o}}
\newcommand{\pobp}{\mathfrak p_{\bold o +}}
\newcommand{\pobm}{\mathfrak p_{\bold o -}}


\newcommand{\bb}{\frak b}

\newcommand{\bbbold}{\text{\boldmath$b$\unboldmath}}

\renewcommand{\ll}{\,\frak l}
\newcommand{\uC}{\underline{\Cal C}}
\newcommand{\uZZ}{\underline{\ZZ}}
\newcommand{\B}{\mathbb B}
\newcommand{\CL}{\text{\rm Cl}}

\newcommand{\pp}{\frak p}

\newcommand{\OKp}{O_{\smallkay,p}}

\renewcommand{\top}{\text{\rm top}}

\newcommand{\bF}{\bar{\mathbb F}_p}

\newcommand{\beq}{\begin{equation}}
\newcommand{\eeq}{\end{equation}}


\newcommand{\Dl}{\Delta(\l)}
\newcommand{\mm}{{\bold m}}

\newcommand{\FD}{\text{\rm FD}}
\newcommand{\LDS}{\text{\rm LDS}}

\newcommand{\dcM}{\dot{\Cal M}}
\newcommand{\bpm}{\begin{pmatrix}}
\newcommand{\epm}{\end{pmatrix}}

\newcommand{\GW}{\text{\rm GW}}

\newcommand{\uk}{\bold k}
\newcommand{\uo}{\text{\boldmath$\o$\unboldmath}}

\newcommand{\uz}{\und{\zeta}}
\newcommand{\duz}{\und{\dot\zeta}}
\newcommand{\ub}{\und{b}}
\newcommand{\uB}{\und{B}}
\renewcommand{\uC}{\und{C}}
\newcommand{\xp}{x_+}
\newcommand{\xm}{x_-}
\newcommand{\tu}{\tilde u}
\newcommand{\sspan}{\text{\rm span}}

\newcommand{\dbs}[1]{\frac{\d \uB_{#1}}{\d s}}
\newcommand{\dbt}[1]{\frac{\d \uB_{#1}}{\d t}}

\newcommand{\OFD}{\text{\rm OFD}}
\newcommand{\Erf}{\text{\rm Erf}}
\newcommand{\Erfc}{\text{\rm Erfc}}
\newcommand{\Arctan}{\text{\rm Arctan}}

\newcommand{\now}{\count0=\time 
\divide\count0 by 60
\count1=\count0
\multiply\count1 by 60
\count2= \time
\advance\count2 by -\count1
\the\count0:\the\count2}




\title{Theta integrals and generalized error functions}

\author{Stephen Kudla}

\maketitle

\section{Introduction}

Recently  Alexandrov, Banerjee, Manschot and Pioline, \cite{ABMP},  introduced and investigated certain generalized error functions
and used them to construct theta series for indefinite lattices of signature\footnote{Following our habit, we have taken signature $(n-2,2)$ in place of  
signature $(2,n-2)$ used in \cite{ABMP}.} $(n-2,2)$.  Their result provides an analogue of the work of Zwegers \cite{zwegers} for the case of signature $(n-1,1)$.
Roughly speaking, in each case, the choice of a suitable collection of negative (timelike) vectors allows one to cut out a certain cone of positive 
vectors and to define a convergent theta type series as the sum over lattice vectors in this cone.  Unfortunately, these nice holomorphic $q$-series are not yet modular forms 
in the variable $\tau$ in the upper halfplane, where  $q=e^{2\pi i \tau}$. 
A main result of \cite{zwegers}, for signature $(n-1,1)$, and of \cite{ABMP}, for signature $(n-2,2)$,  is that these $q$-series can be completed by the addition of a certain non-holomorphic series, obtained as a sum over all lattice vectors, 
so that the resulting function is a (non-holomorphic) modular form of weight $n/2$ in $\tau$. The completion in Zwegers involves the classical error function
$E_1(u)$, 
while the completion in \cite{ABMP} depends on the new generalized error function
$E_2(\a;u_1,u_2)$ introduced there.

On the other hand, as a very special case of the results of old joint work with John Millson, \cite{KM.I}, \cite{KM.II} and \cite{KM.IHES}, there are theta 
forms $\theta(\tau, \ph_{KM}^{(n-r,r)})$ associated to a lattice $L$ of signature $(n-r,r)$. These non-holomorphic 
series take values in closed $r$-forms on the associated symmetric space, are invariant under a finite index subgroup of the isometry group of the lattice, 
and have a modular transformation law of weight $n/2$ in $\tau$. This transformation law follows from the classical Poisson summation argument due to Siegel, \cite{siegel}, and 
hence arises in a natural way. 
The image $[\theta(\tau, \ph_{KM}^{(n-r,r)})]$ of $\theta(\tau, \ph_{KM}^{(n-r,r)})$ in cohomology inherits the modular transformation law
of the theta form and is often a holomorphic series, \cite{KM.IHES}.  . 

In the case of a lattice $L$ in an indefinite inner product space $V$ of signature $(n-1,1)$, Zwegers' theta series depends on the choice of a pair of 
negative vectors $\{C,C'\}$ lying in the same component of 
the negative cone in $V$.  The pair $\{C,C'\}$  determines a geodesic $\gamma_{C,C'}$ joining the points $C ||C||^{-1}$ 
and $C' || C'||^{-1}$ on the hyperboloid of vectors $v$ with $(v,v)=-1$ in that component. 
Then, up to sign,  Zwegers' completed theta series coincides with 
the integral of the theta form over this geodesic, \cite{kudla.zweg}, \cite{livinskyi},
$$\theta_{\text{Zwegers}}(\tau; \{C,C'\}) = -\int_{\gamma(C,C')} \theta(\tau, \ph_{KM}^{(n-1,1)}).$$
 
In the present note, we show that the non-holomorphic theta series constructed in \cite{ABMP} for lattices $L$ of signature $(n-2,2)$ can also 
be constructed as an integral of the theta form $\theta(\tau,\ph_{KM}^{(n-2,2)})$.

Let $D$ be the space of oriented 
negative $2$-planes in $V= L\tt_\Z\R$, so that  $\theta(\tau,\ph_{KM}^{(n-2,2)})$ is a closed $2$-form on $D$. The main step is to define a certain 
oriented $2$-cycle  $S$ in $D$ over which to integrate this form. 
The basic data consists of two pairs $\{C_1,C_2\}$ and $\{C_{1'},C_{2'}\}$ of negative vectors satisfying certain incidence conditions, 
(\ref{inci-1}), (\ref{inci-2}), and (\ref{inci-3}) in section 3. The relation of these conditions to those introduced in \cite{ABMP} is discussed in Remark~\ref{rem1.1} (ii).
Due to our incidence conditions, 
there are $4$ oriented negative $2$-planes, i.e., $4$ points in $D$, given by
\begin{align}\label{eq1.1}
\begin{aligned}  
z_{12}&=\sspan\{C_1,C_2\}_{\text{p.o.}}, \\ z_{1 2'}&=\sspan\{C_1,C_{2'}\}_{\text{p.o.}},
\end{aligned}
&&
\begin{aligned}
 \ z_{1'2}&=\sspan\{C_{1'},C_2\}_{\text{p.o.}},\\ z_{1'2'}&= \sspan\{C_{1'},C_{2'}\}_{\text{p.o.}},
\end{aligned}
\end{align}
lying on one component, say $D^+$ of $D$. Here the subscript `p.o.'  indicates that the given ordered pair of vectors defines the 
orientation. Any negative vector $C$ defines a copy of hyperbolic space of dimension $n-2$ in $D$ consisting of the oriented negative $2$-planes containing $C$. 
If we write $H_1$, $H_2$, $H_{1'}$ and $H_{2'}$ for the hyperbolic subspaces defined by $C_1$,  $C_2$, $C_{1'}$ and $C_{2'}$, then 
these subspace intersect in precisely $8$ points, $4$ on each component
of $D$. The intersection points on $D^+$ are precisely the points (\ref{eq1.1}), for example, 
$z_{12}=H_1\cap H_2\cap D^+$, etc. 
The points $z_{12}$ and $z_{12'}$ lie on $H_1$ and are joined by a unique geodesic $\gamma_1$ in $H_1$. This and the analogously defined geodesics $\gamma_{1'}$, $\gamma_2$ 
and $\gamma_{2'}$ form a quadrilateral loop in $D$.  Let $S = S(C_1,C_2, C_{1'},C_{2'})$ be the geodesic surface in $D$ filling in this quadrilateral. 

Recall that the theta form is defined as follows. 
For $x\in V$, let
$$\ph_{KM}^o(x) =2 (\ \o_1(x)\wedge \o_2(x) - \frac{1}{4\pi} \O\,)\,e^{-2\pi R(x,z)},$$
and, for $\tau=u+iv$ in the upper half-plane, let 
$$\ph_{KM}(\tau,x) = q^{\frac12(x,x)}\,\ph_{KM}^o(x \sqrt{v}), \qquad q= e(\tau) = e^{2\pi i \tau},$$
where we write $\ph_{KM}$ in place of $ \ph_{KM}^{(n-2,2)}$, since the signature is fixed from now on. 
Here $R(x,z) = -(\pr_z(x),\pr_z(x))$, where $\pr_z(x)$ is the orthogonal projection of $x$ to the negative $2$-plane $z$, $\o_1(x)$ and $\o_2(x)$ 
are $1$-forms on $D$ determined by $x$, and $\O$ is an invariant $2$ form on $D$, independent of $x$. 
Let $L^\vee\supset L$ be the dual lattice of $L$.  For $\mu\in L^\vee/L$, the theta form is the closed $2$-form given by
\beq\label{theta.form}
\theta_\mu(\tau; \ph_{KM}) = \sum_{x\in L+\mu}\, \ph_{KM}(\tau,x).
\eeq

Since $S$ is compact, we can compute the integral of $\theta_\mu(\tau; \ph_{KM})$ termwise and so the essential 
result is the evaluation of the integral
\beq\label{basic-integral}
I(x;S) := \int_S \ph^o_{KM}(x).
\eeq

\begin{atheo}  Let $S= S(C_1,C_2, C_{1'},C_{2'})$ be the oriented surface determined by the negative pairs $\{C_1,C_2\}$ and $\{C_{1'},C_{2'}\}$
satisfying the incidence relations (\ref{inci-1}), (\ref{inci-2}), and (\ref{inci-3}).\hfb
(a) For any $x\in V$,  
$$I(x/\sqrt{2};S) =  - \frac14\,\big(\ E_2(C_1,C_2;x) - E_2(C_1,C_{2'};x) -E_2(C_{1'},C_2;x) + E_2(C_{1'},C_{2'};x)\ \big),$$
where $E_2(C,C';x)$ is the `boosted' generalized error function defined in (3.38) of \cite{ABMP}.\hfb
(b) In particular, 
\begin{align*}
I(0;S) &= -\Arctan\left( \frac{(C_1,C_2)}{\sqrt{\Delta_{12}}} \right)+\Arctan\left( \frac{(C_1,C_{2'})}{\sqrt{\Delta_{12'}}} \right) \\
\nass
{}&\qquad\qquad
+  \Arctan\left( \frac{(C_{1'},C_2)}{\sqrt{\Delta_{1'2}}} \right) - \Arctan\left( \frac{(C_{1'},C_{2'})}{\sqrt{\Delta_{1'2'}}} \right).
\end{align*}
Here
$$\Delta(C,C') = (C,C)(C',C') - (C, C')^2,$$
and $\Delta_{12} = \Delta(C_1,C_2)$,  etc.
\end{atheo}

This result is first proved in the `generic' case, where $(x,C)\ne 0$ for all $C\in \{C_1,C_2,C_{1'},C_{2'}\}$ by an 
application of Stoke's theorem. 
Recall that for any nonzero $x\in V$, there is a submanifold
$$D_x=\{\,z\in D\mid x\in z^\perp\,\}$$
 of $D$, 
which is the zero locus of the function $R(x,\cdot)$ and is empty unless $(x,x)>0$.  
If $(x,x)>0$, then $D_x$ has codimension $2$. Indeed, in the hermitian model, it is either empty or a complex divisor on $D$. 
On the set 
$D-D_x$, we define a $1$-form $\psi(x)$ with
$$d\psi(x) = \ph_{KM}^o(x).$$
As in \cite{ABMP}, let 
\beq\label{Phi2x}
\P_2(x) = \frac14\,[\ \sgn(x,C_1) - \sgn (x,C_{1'})\ ][\ \sgn(x,C_2)- \sgn(x,C_{2'})\ ].
\eeq
where we take $\sgn(0)=0$. 
Then, for $x$ generic,  $D_x\cap S$ is non-empty if and only if $\P_2(x)\ne0$!
To apply Stokes' theorem, we must cut out an $\e$-disk around $D_x\cap S$. It turns out that 
$$\int_{\d S}\psi(x)$$
is easy to evaluate and leads immediately to the function
$$f(a,b):=-\frac{1}{2\pi} \, b\, e^{-2\pi b^2}\,\int_0^a  \frac{e^{-2\pi t^2}}{b^2 + t^2}\,dt.$$ 
But this function is given, in turn, as 
$$f(a,b) = -\frac14\,\tilde e_2(a\sqrt{2},b\sqrt{2}),$$
where $\tilde e_2$ is one of the building blocks in \cite{ABMP} in the generalized error function. 
Now using the identity (\ref{Etoe2}) relating $\tilde e_2$ and $E_2$, we obtain the expression in (i) of Theorem~A together with an extra term $\P_2(x)$, 
cf. Corollary~\ref{cor4.4}. 
This extra term is cancelled by the contribution of the excised $\e$-disk!  The case of non-generic $x$ is obtained by continuity, where we note that 
the singularities arising from $\psi(x)$ were not present in the original integral, which depends smoothly on $x$ and $S$. 

Our main global result is then the following. 

\begin{btheo} Let $S=S(C_1,C_2,C_{1'},C_{2'})$ be as in Theorem~A.\hfb 
(a) The series
\beq\label{theta-integral}
I_\mu(\tau;S) := \int_S\theta_\mu(\tau; \ph_{KM}) = \sum_{x\in L+\mu} I(\sqrt{v}\,x;S)\,q^{\frac12(x,x)}
\eeq
is a non-holomorphic modular form of weight $n/2$ with the same transformation law as $\theta_\mu(\tau,\ph_{KM})$. \hfb
(b) For $\mu\in L^\vee/L$,  the $q$-series
\beq\label{hol.part}
\sum_{x\in L+\mu}  \P_2(x) \, q^{\frac12(x,x)}
\eeq
is termwise absolutely convergent. \hfb
(c)
The modular form $I_\mu(\tau;S)$ is its modular completion\footnote{We assume that $L$ is even integral, so that the 
characteristic vector $p$ used in (2.1) of \cite{ABMP} can be taken to be $0$ and hence does not appear in our theta series.
This is only done to simplify the notation;  our results hold in the general case. },
as defined in \cite{ABMP}.
In particular, 
\beq\label{Main-id}
I_\mu(\tau;S) = -\theta_{\text{ABMP}}(\tau;C_1,C_2,C_{1'},C_{2'};\mu),
\eeq
where $\theta_{\text{ABMP}}(\tau;C_1,C_2,C_{1'},C_{2'};\mu)$ is the series defined in \cite{ABMP}. 
\end{btheo}

\begin{rem} \label{rem1.1} (i)  Since the theta form is closed, the integral on the right side of 
(\ref{theta-integral}) does not depend on the choice of the oriented surface $S$ with boundary $ \gamma_1 + \gamma_{2'} - \gamma_{1'} - \gamma_2.$\hfb
(ii)  Our incidence conditions (\ref{inci-1}), (\ref{inci-2}), and (\ref{inci-3}) are a subset of those imposed in \cite{ABMP}, (4.2) and (4.6), 
and they are the natural conditions giving rise to the $4$ points
$\{z_{12}, z_{1'2},z_{12'},z_{1'2'}\}$ on $D^+$ and  the boundary frame of geodesics connecting them.   
It is a pleasant bonus that our conditions are sufficient to ensure the convergence of the $q$-series (\ref{hol.part}).
Their relation to the set of conditions \cite{ABMP} (4.2) alone is not clear. \hfb
(iii) Of course, part (c) is just a restatement of the result of \cite{ABMP} relating the expression $\P_2(x)$ and the quantity $I(x;S)$. \hfb
(iv)
Note that, in passing from Theorem~A to the theta series in Theorem~B, the at first sight anomalous $\sqrt{2}$
is precisely what is introduced by the $\sqrt{2 \tau_2}$ in the argument of $\P$ in the definition of the theta series 
in (2.1) of \cite{ABMP}.
\end{rem}

Our construction also provides the following geometric interpretation of the quantity 
$\P_2(x)$.
\begin{prop}\label{prop.inter}
Suppose that $x\in V$ satisfies the regularity condition in Theorem~A and that $D_x\cap S$ is non-empty. Then $D_x\cap S$ consists of a single point and 
$$\P_2(x) = I(S,D_x)$$
where $I(S,D_x)$ is the(local) intersection number of the (oriented) cycles $S$ and $D_x$ in $D$. 
\end{prop}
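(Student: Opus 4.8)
The plan is to deduce both assertions—that $D_x\cap S$ is a single point and that the resulting local intersection number is $\P_2(x)$—by combining an orthogonality observation that pushes $D_x$ off the boundary of $S$ with the Stokes/residue analysis already carried out for Theorem~A. First I would fix orientations and confine the intersection to the interior. Since $(x,x)>0$ (otherwise $D_x=\emptyset$ and there is nothing to prove), the locus $D_x$ is, in the hermitian model, a totally geodesic complex divisor, so it carries its complex orientation, while $S$ is oriented by the ordered pairs; thus a transverse point of $D_x\cap S$ has a well-defined sign $\pm1$ and $I(S,D_x)$ is meaningful. Moreover, if $z\in D_x\cap H_i$ for some $i\in\{1,2,1',2'\}$, then $C_i\in z$ and $x\in z^\perp$, forcing $(x,C_i)=0$; the regularity hypothesis $(x,C_i)\ne0$ excludes this, so $D_x$ is disjoint from every $H_i$, hence from the bounding geodesics $\gamma_1,\gamma_2,\gamma_{1'},\gamma_{2'}$. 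Therefore $D_x\cap\partial S=\emptyset$, the intersection points lie in the interior of $S$, and $I(S,D_x)$ is invariant under deformations of $S$ rel $\partial S$ that keep the boundary off $D_x$.

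Next I would show that $D_x\cap S$ is a single transverse point. The space $D$ is nonpositively curved (CAT$(0)$) and $D_x$ is totally geodesic, hence convex. Presenting $S$ as the ruled surface swept by the geodesics $\delta_s$ joining corresponding points of $\gamma_2$ and $\gamma_{2'}$, each $\delta_s$ meets the convex set $D_x$ in at most one point, transversally when it meets it at all. Because $\P_2(x)\ne0$ forces $\sgn(x,C_2)=-\sgn(x,C_{2'})$, the function $R(x,\cdot)$ (equivalently the signed distance to $D_x$) takes opposite signs at the two endpoints of the pencil, so by a monotonicity argument exactly one ruling $\delta_{s_0}$ crosses $D_x$. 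This reproduces the single point around which one $\epsilon$-disk is excised in the proof of Theorem~A. I expect this to be the main obstacle: one must verify that no ruling is tangent to $D_x$, and that the crossing occurs for a unique $s$, rather than for several values of $s$ whose signed contributions would merely sum to $\pm1$; the convexity of $D_x$ together with the sign behavior at the endpoints is the tool, but the monotonicity must be made rigorous.

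Finally I would pin down the sign. Excising a small disk $B_\epsilon(p)$ around the unique point $p\in D_x\cap S$ and applying Stokes' theorem to $\psi(x)$ on $S\smallsetminus B_\epsilon(p)$ yields, as $\epsilon\to0$, $\int_S\ph_{KM}^o(x)=\int_{\partial S}\psi(x)-c\,I(S,D_x)$, where $c$ is the period of $\psi(x)$ around $D_x$. The proof of Theorem~A evaluates $\int_{\partial S}\psi(x)$ and shows that it exceeds $I(x;S)=\int_S\ph_{KM}^o(x)$ by exactly $\P_2(x)$, the extra term cancelled by the excised disk; hence $c\,I(S,D_x)=\P_2(x)$. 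Since the normalization of the singularity of $\ph_{KM}^o(x)$ along $D_x$ makes the period $c=+1$ with respect to the complex orientation, we conclude $I(S,D_x)=\P_2(x)$, which by the single-point result is the local intersection number. The delicate point here is purely one of signs—checking that the complex orientation of $D_x$ and the orientation of $S$ combine to give $c=+1$ rather than $-1$—and this is precisely the bookkeeping that produced the cancelling $\P_2(x)$ term in the proof of Theorem~A.
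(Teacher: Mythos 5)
Your strategy---excise a disk around the intersection point, identify the excised contribution as (period of $\psi(x)$ around $D_x$) $\times$ $I(S,D_x)$, and read off the answer from the $\P_2(x)$ term already produced in the proof of Theorem~A---is genuinely different from the paper's, which computes the local intersection sign directly: at the unique point $z_0=\phi(s_0,t_0)$ it writes down the tangent frame $\phi_*(\partial/\partial s),\ \phi_*(\partial/\partial t)$ from the parametrization (\ref{param.S}), observes that the normal space of $D_x$ in $T_{z_0}(D)\simeq U(z_0)^2$ is $(\R x)^2$ with orientation fixed by the complex structure, and evaluates $\sgn\det$ of the resulting $2\times 2$ matrix, obtaining $\sgn((x,C_{1'})-(x,C_1))\,\sgn((x,C_{2'})-(x,C_2))=\P_2(x)$. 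Your route has a genuine gap precisely at the point you flag: the claim that the period $c$ equals $+1$. From (\ref{def-psi}) the singular part of $\psi(x)$ is $-\frac{1}{2\pi}e^{-2\pi R}R^{-1}\big((x,\zeta_1)(x,d\zeta_2)-(x,\zeta_2)(x,d\zeta_1)\big)$, which is $-\frac{1}{2\pi}\,d\theta$ in polar coordinates on the normal plane; its period around a small linking circle is therefore $\mp 1$ according to whether the orientation induced from $S$ makes $\theta$ increase or decrease, and deciding which requires exactly the comparison of the orientation of $S$ (ordered parameters $(s,t)$, properly oriented frames $[B_1(s),B_2(t)]$) with that of the normal bundle of $D_x$ --- i.e., the determinant computation the paper performs. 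Note also that the $\P_2(x)$ term in Theorem~A is \emph{not} obtained there as ``period times intersection number'': it comes from the explicit evaluation of $\int_{\partial S^{\epsilon}(x)}\psi$ via $\tilde e_2$ and the limit $\P_2^{\epsilon}(x)\to\P_2(x)$. So your argument establishes only $c\,I(S,D_x)=\P_2(x)$ and still owes the computation of $c$, which is the whole content of the proposition.

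Two further points. First, your uniqueness argument is both unnecessary and flawed as stated: $R(x,\cdot)=-(\pr_z(x),\pr_z(x))\ge 0$ is not a signed distance and never changes sign, so the intermediate-value mechanism you invoke for the pencil of rulings does not literally apply; the paper's Lemma~\ref{int-lemma} settles uniqueness in one line, since $\phi(s,t)\in D_x$ is equivalent to the two decoupled linear equations $(1-s)(x,C_1)+s(x,C_{1'})=0$ and $(1-t)(x,C_2)+t(x,C_{2'})=0$, whose unique solutions $s_0,t_0$ lie in $(0,1)$ exactly when $\P_2(x)\ne 0$. No CAT(0) convexity is needed. Second, your observation that regularity forces $D_x\cap H_i=\emptyset$, hence $D_x\cap\partial S=\emptyset$, is correct and a nice way to see that the intersection is interior, consistent with $s_0,t_0\in(0,1)$.
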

\begin{cor}
The holomorphic  part (\ref{hol.part}) of the indefinite theta function \hfb
$I_\mu(\tau;S)$ 
can be written as
$$\sum_{x\in L+\mu} I(S,D_x)\,q^{\frac12(x,x)}.$$
\end{cor}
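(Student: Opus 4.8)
The plan is to prove the Corollary termwise. The two series
$\sum_{x\in L+\mu}\P_2(x)\,q^{\frac12(x,x)}$ and
$\sum_{x\in L+\mu} I(S,D_x)\,q^{\frac12(x,x)}$ are indexed by the same set $L+\mu$ and carry the identical power $q^{\frac12(x,x)}$ in each term, so it suffices to verify the pointwise identity $\P_2(x)=I(S,D_x)$ for every $x\in L+\mu$. I would organize the verification according to the norm of $x$ and whether $x$ satisfies the regularity condition of Theorem~A.

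First I would dispose of the vectors with $(x,x)\le 0$. For nonzero such $x$ the locus $D_x$ is empty, so the local intersection number $I(S,D_x)$ vanishes; for $x=0$ the value $\P_2(0)=0$ by the convention $\sgn(0)=0$ in (\ref{Phi2x}), and there is nothing to intersect. On the other hand, the incidence relations (\ref{inci-1}), (\ref{inci-2}), and (\ref{inci-3}) force $\P_2$ to vanish on the closed region $\{(x,x)\le0\}$ — this is exactly the geometric input underlying the termwise convergence of (\ref{hol.part}) asserted in Theorem~B(b). Hence both sides vanish and these terms match.

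Next suppose $(x,x)>0$ and $x$ is regular, i.e. $(x,C)\ne0$ for each $C\in\{C_1,C_2,C_{1'},C_{2'}\}$. Regularity guarantees that $D_x$ misses the boundary frame $\partial S=\gamma_1+\gamma_{2'}-\gamma_{1'}-\gamma_2$: a point of $D_x\cap H_i$ would lie in a plane $z$ containing $C_i$ and orthogonal to $x$, forcing $(x,C_i)=0$, whereas $\partial S\subset H_1\cup H_2\cup H_{1'}\cup H_{2'}$. Thus $I(S,D_x)$ is a well-defined integer. If $D_x\cap S=\emptyset$, then $I(S,D_x)=0$, and by the criterion recorded just before Theorem~A — for regular $x$, $D_x\cap S$ is nonempty precisely when $\P_2(x)\ne0$ — we also get $\P_2(x)=0$. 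If instead $D_x\cap S\ne\emptyset$, Proposition~\ref{prop.inter} gives directly that $D_x\cap S$ is a single point and $\P_2(x)=I(S,D_x)$. In either case the pointwise identity holds, and one notes for consistency that for regular $x$ each bracket in (\ref{Phi2x}) lies in $\{-2,0,2\}$, so $\P_2(x)\in\{0,\pm1\}$, matching an honest transverse intersection at one point.

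The remaining, and genuinely delicate, case is that of non-regular $x$ with $(x,x)>0$, where $(x,C_i)=0$ for at least one index $i$. Here $D_x$ meets the subspace $H_i$ and may cross a boundary geodesic or a vertex $z_{12},\dots,z_{1'2'}$ of $S$, so the naive intersection number is not defined; correspondingly $\P_2(x)$ can now take the half-integral values $\pm\tfrac14,\pm\tfrac12$. I would reconcile the two by the perturbation principle already used at the close of the proof of Theorem~A: since the theta form is closed, $\int_S\theta_\mu(\tau;\ph_{KM})$ depends only on the boundary frame and not on the filling $S$ (Remark~\ref{rem1.1}(i)), so one replaces $S$ by nearby surfaces $S'$ with the same boundary meeting $D_x$ transversally and away from the vertices. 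The convention $\sgn(0)=0$ in (\ref{Phi2x}) makes $\P_2(x)$ the arithmetic mean of the values $\P_2$ takes on the regular chambers adjacent to each wall $(x,C_i)=0$, and this equals the correspondingly averaged intersection count. I expect verifying this compatibility of the $\sgn(0)=0$ convention with the averaged intersection number to be the main obstacle; once it is in place, the pointwise identity $\P_2(x)=I(S,D_x)$ holds for all $x\in L+\mu$, and multiplying by $q^{\frac12(x,x)}$ and summing yields the asserted expression for the holomorphic part, proving the Corollary.
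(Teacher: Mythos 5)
Your argument is correct and is essentially the paper's: the Corollary is recorded as an immediate consequence of Proposition~\ref{prop.inter} (for regular $x$ with $D_x\cap S\ne\emptyset$) together with Lemma~\ref{int-lemma} and the positivity lemma of Section~\ref{hol-conv-section}, which give $\P_2(x)=0=I(S,D_x)$ in the remaining regular and non-positive-norm cases, exactly as in your first two paragraphs. The non-regular case you single out as delicate is a genuine subtlety, but it is one the paper itself leaves untreated --- Proposition~\ref{prop.inter} is stated only under the regularity hypothesis, and there $D_x$ can meet $\d S$ so that $I(S,D_x)$ needs the very averaging convention you propose --- so your perturbation sketch is an honest (if unfinished) attempt to go beyond the paper's own implicit proof rather than a gap in your reproduction of it.
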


As is evident from this description, our result technically depends only on the special functions results of section 3 of \cite{ABMP}, i.e., 
in our approach, the completed theta series is constructed directly as an integral of the theta form and the results about generalized error functions, 
in particular (\ref{Etoe2}), etc., then identify it as a completion 
of the $q$-series (\ref{hol.part}).   But of course, the whole business depends entirely on 
the original ideas of \cite{ABMP} about constructing a signature $(n-2,2)$ analogue of Zwegers' 
theta series,  the ingenious choice of the correct input data $\{\{C_1,C_2\},\{C_{1'},C_{2'}\}\}$, the function $\P_2(x)$, etc. 
Aside from the connection with the theta form, our main addition is the introduction of the surface $S$ over which 
we take our integral and the interpretation of 
$\P_2(x)$ as an intersection number.  Hopefully, these ideas will prove useful in the construction of examples 
for more general signatures proposed in Section 6 of \cite{ABMP}.  The theta forms $\theta(\tau,\ph_{KM}^{(n-r,r)})$ 
are available in this situation and can undoubtedly be used in the same way, cf. the discussion in section~\ref{section-other-sigs}.

Here is a brief description of the contents of the paper.  In section 2, we review the construction of the $2$-form $\ph_{KM}^o(x)$
and show that, for $x\ne 0$ and away from the set $D_x$, there is an explicitly constructed $1$-form $\psi(x)$ with 
$d\psi(x) = \ph_{KM}^o(x)$.  
In section 3, we describe how the data $\{C_1,C_2\}$,  $\{C_{1'},C_{2'}\}$ satisfying the incidence relations of \cite{ABMP}
gives rise the a quadrilateral of geodesics and a spanning surface $S$. In section 4, we compute the basic integral of $\ph_{KM}^o(x)$ 
over the surface $S$. This is done using Stoke's theorem and the $1$-form $\psi(x)$, where the singularity along $D_x$ 
necessitates a limit procedure when $D_x\cap S$ is non-empty. Also in this section, we prove Proposition~\ref{prop.inter}. 
In section~\ref{section-irregular}, we take care of the cases where the regularity condition fails. In section~\ref{section-shadows}, we discuss how 
the computation of the shadows i.e., the image of $I_\mu(\tau;S)$ under the lowering operator, can be done in our language, and in section~\ref{section-other-sigs}
we sketch how things should go for other signatures. Here the surface $S$ is generalized to a hypercube, and the convergence of the 
associated $q$-series is proved. 
Convenient coordinates for our computation are explained in an Appendix, section 6.

We thank Luis Garcia for a useful conversation. 

\subsection{Notation}\label{notation.section}

Following the conventions of \cite{ABMP}, we write
$$\Delta(C,C') = (C,C)(C',C') - (C, C')^2  = \det\bpm (C,C)& (C,C')\\ (C',C)&(C',C')\epm,$$
$\Delta_{12} = \Delta(C_1,C_2)$,  etc.  More generally, for any subset $I$ of $\{1,2,1',2'\}$, $\Delta_{I}$ is the determinant of the matrix 
of inner products $((C_i,C_j))_{i,j\in I}$. In addition, write
$$C_{1\perp 2} = C_1 - \frac{(C_1,C_2)}{(C_2,C_2)}\,C_2,\qquad\text{etc.}$$
For a vector $C$ with $(C,C)\ne 0$, we write
$$||C|| = |(C,C)|^{\frac12}, \qquad \und{C} = \frac{C}{||C||}.$$
We use the convention $\sgn(0)=0$ to extend the sign function $\sgn$ from $\R^\times$ to $\R$. 

For collections $x = [x_1,\dots, x_r]\in V^r$ and $y=[y_1,\dots,y_s]\in V^s$, we let 
$$(x,y) = ((x_i,y_j))\in M_{r,s}(\R).$$
Note that for $a\in \GL_r(\R)$ and $b\in \GL_s(\R)$, 
$$(xa,yb) = {}^ta (x,y)b.$$

\section{The basic setup}
Let $V$, $(\,,\,)$  be an indefinite inner product space of signature $(n-2,2)$ and let $D$ be the space of 
oriented negative $2$-planes in $V$. Let 
$$\OFD = \{ \, \zeta = [\zeta_1,\zeta_2] \in V^2\mid (\zeta, \zeta)=-1_2\ \}$$
be the space of oriented orthonormal frames and let $\pi:\OFD \rightarrow D$ be the projection sending an oriented 
frame to the oriented $2$-plane it spans. 

For $z\in D$ with properly oriented orthonormal basis $\zeta = [\zeta_1,\zeta_2]$, the orthogonal projection 
of $x\in V$ to $z$ is given by 
$$\pr_z(x) = \zeta (\zeta,\zeta)^{-1}(\zeta,x).$$
Then
$$R(x,z) = -(\pr_z(x),\pr_z(x)) =(x,\zeta)(\zeta,x),$$
and the majorant is
$$(x,x)_z = (x,x) +2 R(x,z).$$
Note that $z\in D_x$, i.e., $z$ is in $x^\perp$ if and only if $R(x,z)=0$.

\subsection{The Schwartz form}

We recall the definition and basic properties of the Schwartz form constructed in \cite{KM.I}, \cite{KM.II}. 
Our conventions about differential forms are explained in more detail in the Appendix~\ref{subsec-tan-vecs}.

Define a Schwartz function on $V$ valued in $2$-forms on $D$ as follows.  
For $\eta$ and $\mu \in T_z(D) = \Hom(z,z^\perp)$, 
we choose $\zeta\in \OFD$ with $\pi(\zeta)=z$ and write
$$ [\eta_1,\eta_2] =[\eta(\zeta_1), \eta(\zeta_2)], \qquad [\mu_1,\mu_2] = [\mu(\zeta_1), \mu(\zeta_2)] \ \in U(z)^2.$$
Here, for convenience, we write $z^\perp=U(z)$. 
Then let 
$$\o_1(x)\wedge\o_2(x)(\eta,\mu) = (x,\eta_1)(x,\mu_2) - (x,\eta_2)(x,\mu_1) = \det\bpm (x,\eta_1)&(x,\eta_2)\\ (x,\mu_1)&(x,\mu_2)\epm,$$
and 
$$\O(\eta,\mu) = (\eta_1,\mu_2) - (\eta_2,\mu_1).$$
These expressions are independent of the choice of $\zeta\in \OFD$ and define $2$-forms on $D$. The basic Schwartz form\footnote{Note that we have included an extra factor of $2$ 
here compared with the expression given in Proposition~5.1 of \cite{KM.I}. This is done to compensate for the factor $2^{-q/2}$ that occurs in the fiber integral in 
Proposition~6.2 of \cite{KM.II} and hence to correctly normalize the Poincar\'e dual form.}
$$\ph_{KM}(x,z)= 2\,\big(\ \o_1(x)\wedge \o_2(x) - \frac{1}{4\pi}\O\ \big)\,\exp(-\pi  (x,x)_z),$$
where $(x,x)_z$ is the majorant,  is a closed $2$-form on $D$.  This is a very special case of the forms defined in \cite{KM.I} and \cite{KM.II}. 
Note that 
$$\ph_{KM}(0) = -\frac{1}{2\pi}\,\O.$$
For convenience, since $x$ will often be fixed in our discussion, we write 
$$\ph_{KM}(x) = e^{-\pi(x,x)}\,\ph_{KM}^o(x).$$
Note that the pullback of $\ph_{KM}^o(x)$ under $\pi:\OFD \rightarrow D$ is
$$\pi^*(\ph_{KM}^o(x)) = 2\,\big[\,(x,d\zeta_1)\wedge (x,d\zeta_2) - \frac1{4\pi}\,(d\zeta_1,d\zeta_2)\,\big] \,e^{-2\pi R}.$$

On the set 
$$D - \{[x,z]\in V\times D\mid z\in D_x\}, $$
define a $1$-form $\psi(x)$ whose pullback to $\OFD$ is given by
\begin{align}\label{def-psi}
\pi^*(\psi(x))  &= -\frac{1}{2\pi} e^{-2\pi R}\,\big(\ R^{-1}(\,(x,\zeta_1)(x,d\zeta_2)-(x,\zeta_2)(x,d\zeta_1) \,) - (\zeta_2,d\zeta_1)\ \big).
\end{align}

The proof of the following proposition is given in Appendix~\ref{subsec-proof-dpsi}.
 \begin{prop} \label{dpsi}
  On the set $D - D_x$, 
 $$\ph_{KM}(x) = d \psi(x)\,e^{-\pi(x,x)}.$$
 \end{prop}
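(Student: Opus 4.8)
The plan is to verify the identity after pulling back along the submersion $\pi:\OFD\to D$. Since $\pi$ is a surjective submersion (a principal $\text{SO}(2)$-bundle), $\pi^*$ is injective on differential forms and commutes with $d$, so it suffices to check that $d\,\pi^*(\psi(x)) = \pi^*(\ph_{KM}^o(x))$ on the preimage of $D-D_x$. Observe first that the constant factor $e^{-\pi(x,x)}$ plays no role: because $\ph_{KM}(x)=e^{-\pi(x,x)}\ph_{KM}^o(x)$ and $(x,x)$ is independent of $z$, the asserted identity $\ph_{KM}(x)=d\psi(x)\,e^{-\pi(x,x)}$ is equivalent to $\ph_{KM}^o(x)=d\psi(x)$.

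First I would introduce the frame quantities $f_i=(x,\zeta_i)$, so that $R=(x,\zeta)(\zeta,x)=f_1^2+f_2^2$ and $df_i=(x,d\zeta_i)$, together with the connection $1$-form $\kappa=(\zeta_2,d\zeta_1)$. Differentiating the defining relations $(\zeta_i,\zeta_j)=-\delta_{ij}$ gives $(\zeta_i,d\zeta_j)+(d\zeta_i,\zeta_j)=0$; in particular $(\zeta_1,d\zeta_1)=(\zeta_2,d\zeta_2)=0$ and $(\zeta_1,d\zeta_2)=-\kappa$. Hence the $z$-components of $d\zeta_1,d\zeta_2$ are $-\zeta_2\kappa$ and $\zeta_1\kappa$, so the $U(z)$-valued solder forms $D\zeta_i:=d\zeta_i-\pr_z(d\zeta_i)$ satisfy $d\zeta_1=D\zeta_1-\zeta_2\kappa$ and $d\zeta_2=D\zeta_2+\zeta_1\kappa$. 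Writing $\alpha_i:=(x,D\zeta_i)$ this yields $df_1=\alpha_1-f_2\kappa$ and $df_2=\alpha_2+f_1\kappa$, and a direct substitution gives $\beta:=R^{-1}(f_1\,df_2-f_2\,df_1)=\kappa+R^{-1}(f_1\alpha_2-f_2\alpha_1)$. Thus $\gamma:=\beta-\kappa$ is horizontal (basic), which is exactly the combination appearing in $\pi^*(\psi(x))=-\tfrac{1}{2\pi}\,e^{-2\pi R}\,\gamma$.

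The three computations I would then carry out are: (i) $d\beta=0$, since $\beta$ is the pullback under $(f_1,f_2):\OFD\to\R^2$ of the standard angular form on $\R^2-\{0\}$, as one checks from $d(f_1\,df_2-f_2\,df_1)=2\,df_1\wedge df_2$ and $d(R^{-1})=-R^{-2}dR$ together with $dR\wedge\beta=2\,df_1\wedge df_2$; (ii) the curvature identity $d\kappa=(d\zeta_2,d\zeta_1)=-(d\zeta_1,d\zeta_2)$, obtained by differentiating $\kappa=(\zeta_2,d\zeta_1)$ and using $d^2\zeta_1=0$; and (iii) $dR\wedge\gamma=2\,\alpha_1\wedge\alpha_2=2\,(x,D\zeta_1)\wedge(x,D\zeta_2)$, which follows from $dR=2(f_1\alpha_1+f_2\alpha_2)$ and the formula for $\gamma$. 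Assembling these, $d\gamma=d\beta-d\kappa=(d\zeta_1,d\zeta_2)$, and therefore
\[
d\,\pi^*(\psi(x)) = e^{-2\pi R}\,dR\wedge\gamma-\tfrac{1}{2\pi}\,e^{-2\pi R}\,d\gamma = 2\,e^{-2\pi R}\,\big[\,(x,D\zeta_1)\wedge(x,D\zeta_2)-\tfrac{1}{4\pi}(d\zeta_1,d\zeta_2)\,\big],
\]
which is precisely $\pi^*(\ph_{KM}^o(x))$, since $(x,D\zeta_i)$ represents $\o_i(x)$ and $(d\zeta_1,d\zeta_2)=(D\zeta_1,D\zeta_2)$ represents $\O$.

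The main point requiring care is the bookkeeping of the connection form $\kappa$. Both $\ph_{KM}^o(x)$ and $\psi(x)$ are forms on $D$, so only their basic parts are relevant; the virtue of isolating $\gamma=\beta-\kappa$ is precisely that it is horizontal, so that $dR\wedge\gamma$ already reproduces $\o_1(x)\wedge\o_2(x)$ with no residual connection terms, while the entire dependence on $\kappa$ is funneled through the single curvature term $d\kappa$, which reconstitutes the invariant form $\O$ with exactly the normalizing constant $\tfrac{1}{4\pi}$. The only genuine analytic input is that $R\neq 0$ off $D_x$, which is what makes $\beta$, $\gamma$, and hence $\psi(x)$ well defined and guarantees $d\beta=0$ there.
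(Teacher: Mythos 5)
Your proof is correct and follows essentially the same route as the paper: pull back to $\OFD$, exploit the orthonormality relations and the closedness of the angular form $R^{-1}(f_1\,df_2-f_2\,df_1)$, and let the curvature $d(\zeta_2,d\zeta_1)$ produce the $\O$-term. Your bookkeeping via the horizontal forms $\alpha_i=(x,D\zeta_i)$ is in fact slightly cleaner than the paper's, which discards the term $dR\wedge(\zeta_2,d\zeta_1)$ outright (legitimately, but only after the final restriction to the connection subspace), whereas you show that this term is precisely what converts $df_1\wedge df_2$ into the basic form $\alpha_1\wedge\alpha_2$.
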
 

\section{The spanning surface of a frame}

In this section, we explain how to obtain an oriented $2$-cycle in $D$ from 
the basic data introduced in \cite{ABMP} consisting of two pairs of 
negative\footnote{Note that our quadratic form is the negative of theirs, so for us, the term `timelike' refers to vectors $C$ with 
$ Q(C) = (C,C)<0$.} vectors $\{C_1,C_2\}$  and $\{C_{1'}, C_{2'}\}$ 
subject to a certain collection of incidence relations.  We impose these relations step by step, observing how the 
geometry we want emerges. 

With the notation of \cite{ABMP},  as explained in section~\ref{notation.section},
we impose the incidence relations\footnote{Notice that replacing signature $(2,n-2)$ with signature $(n-2,2)$ amounts to  changing the sign of the quadratic form. 
The quantities $\Delta(C,C')$ are invariant under this change, but the quantities $(C,C')$ change sign. This gives the equivalence of 
our conditions (\ref{inci-1}) and (\ref{inci-2}) with the conditions (4.6) in \cite{ABMP}.}
\begin{align}
&\Delta_{12}, \ \Delta_{1'2}, \ \Delta_{12'},\ \Delta_{1'2'}\ >\ 0,\label{inci-1}\\
\noalign{\vskip -10pt and}
& (C_{2\perp1},C_{2'\perp1}), \  (C_{2\perp1'},C_{2'\perp1'}), \ (C_{1\perp2},C_{1'\perp2}),\ (C_{1\perp2'},C_{2\perp2'}) \ <\ 0.\label{inci-2}
\end{align} 

Condition (\ref{inci-1}) is equivalent to the requirement at the pairs $\{C_1,C_2\}$, $\{C_{1'},C_2\}$, $\{C_1,C_{2'}\}$
and $\{C_{1'},C_{2'}\}$ span negative $2$-planes 
\beq\label{Ps} P_{12},\  P_{1'2},\ P_{12'},\ P_{1'2'}, 
\eeq
where, for the moment we ignore the orientations.

For any negative vector $C\in V$, we let 
$$H_C = \{ z\in D\mid C\in z\},$$
and $V_C = C^\perp$. 
Then $V_C$ has signature $(n-2,1)$ and we have an isomorphism
\beq\label{hyperb}
\{ v\in V_C \mid (v,v)=-1\} \isoarrow H_C, \qquad v \mapsto \sspan\{C,v\}_{\text{p.o.}}.
\eeq
Here the subscript `p.o.' indicates that the given ordered frame is properly oriented. 
In particular, $H_C$ is a copy of hyperbolic $n$-space in $D$. Note that $H_C$ has two components and that vectors $v$ and $v'$ 
in $V_C$ with $(v,v)=-1=(v',v')$ lie in the same component precisely when $(v,v')<0$.  

From the given set of negative vectors $\{C_1,C_2,C_{1'},C_{2'}\}$ we obtain hyperbolic subspaces $H_1$, $H_{1'}$, $H_2$ and $H_{2'}$ in $D$, where 
we write $H_1 = H_{C_1}$, $V_1=V_{C_1}$, etc. Assuming condition (\ref{inci-1}), the intersections $H_1\cap H_2$, $H_1\cap H_{2'}$, $H_{1'}\cap H_2$,
and $H_{1'}\cap H_{2'}$ of these  
hyperbolic subspaces are easily determined. For example, 
$$H_1\cap H_2 = \{\,\sspan\{C_1,C_2\}_{\text{p.o.}} , \sspan\{C_1,-C_2\}_{\text{p.o.}}\, \}.$$
Since  $\sspan\{C_1,C_2\} = \sspan\{C_1,C_{2\perp1}\}$ is a negative $2$-plane, it follows that
$C_{2\perp1}$ is a negative vector in $V_1$ as is $C_{2'\perp1}$, by the same argument. Since $V_1$ has signature $(n-2,1)$ 
the inner product\footnote{This is why we have replaced the $\ge0$ in (4.6b)--(4.6e) with $<0$ in (\ref{inci-2})}
 $(C_{2\perp1},C_{2'\perp1})\ne 0$, and condition (\ref{inci-2}) implies that the vectors $C_{2\perp1}$ and $C_{2'\perp1}$ lie in the same component
 of the negative cone in $V_1$.  The same discussion applies to the other pairs of projections, $C_{1\perp2}$, $C_{1'\perp2}$, etc. 
 
 Next we add the condition 
 \beq\label{inci-3}
 \Delta_{11'22'}>0.
 \eeq
 This is (4.2b) in \cite{ABMP}.
 This implies that $U=\sspan\{C_1,C_2,C_{1'},C_{2'}\}$ is a $4$ dimensional subspace of $V$ of signature $(2,2)$. It follows that the negative 
 $2$-planes in (\ref{Ps}) are distinct.  
 
 Thus, under conditions (\ref{inci-1}) , (\ref{inci-2}), and (\ref{inci-3}), we obtain 
  $8$ points in $D$, the $4$ listed in (\ref{eq1.1}) and the $4$ obtained from them by reversing the orientations.

By condition (\ref{inci-2}), $C_{2\perp1}$ and $C_{2'\perp1}$ lie in the same component of the negative cone in $V_1$, and hence
there is a unique geodesic in $H_1$ joining the points $z_{12}$ and $z_{12'}$ given by\footnote{We make no claim that $t$ is the arc-length parameter.} 
\beq\label{eq-gamma1}
\gamma_1(t) = \sspan\{ C_1, (1-t)C_{2} + t C_{2'}\}_{\text{p.o.}} = \sspan\{ \uC_1, \uB_{2\perp1}(t)\}_{\text{p.o.}}, \qquad t\in [0,1],
\eeq
where 
$$B_{2\perp 1}(t) = (1-t)C_{2\perp1} + t C_{2'\perp1}, \qquad \uB_{2\perp1}(t) = \frac{B_{2\perp1}(t)}{||B_{2\perp1}(t)||}, \quad \uC_1 = \frac{C_1}{||C_1||}.$$  
Note that the second expression gives an orthonormal basis of $\gamma(t)$ whose second component lies in $V_1$. In particular, 
$\gamma_1$ is the image in $D$ of the geodesic in (\ref{hyperb}) joining the points $\und{C}_{2\perp1}$ and $\und{C}_{2'\perp1}$. 

There are analogous geodesics $\gamma_{1'}$ from $z_{1'2}$ to $z_{1'2'}$, $\gamma_{2}$ from $z_{12}$ to $z_{1'2}$ 
and $\gamma_{2'}$ from $z_{12'}$ to $z_{1'2'}$.  Explicitly, these are given by 
\begin{align*}
\gamma_{1'}(t)&=\sspan\{ \uC_{1'}, \uB_{2\perp1'}(t)\}_{\text{p.o.}}, \qquad t\in [0,1],\\
\nass
\gamma_2(s)&= \sspan\{\uB_{1\perp2}(s), \uC_{2}\}_{\text{p.o.}}, \qquad\  s\in [0,1],\\
\nass
\gamma_{2'}(s) &=\sspan\{\uB_{1\perp2'}(s), \uC_{2'}\}_{\text{p.o.}}, \qquad s\in [0,1],
\end{align*}
where
\begin{align*}
B_{2\perp1'}(t) & = (1-t)C_{2\perp 1'} + t C_{2'\perp1'},\\
\nass
B_{1\perp2}(s) &= (1-s) C_{1\perp2}+s C_{1'\perp2}\\
\nass
B_{1\perp2'}(s) &= (1-s) C_{1\perp2'} + s C_{1'\perp 2'}.
\end{align*}
Altogether, they form a  closed loop in $D$. 
Note that the first component is fixed for $\gamma_1$ and $\gamma_{1'}$ and the second component is fixed for 
$\gamma_2$ and $\gamma_{2'}$. This will result in a sign change later in the calculation.

We parametrize a $2$-cycle $S= S(C_1, C_2, C_{1'}, C_{2'})$ in $D$ by 
\beq\label{param.S}
\phi: [0,1]^2 \lra D, \qquad \phi(s,t) = \sspan\{ B_1(s), B_2(t)\}_{\text{p.o.}}
\eeq
where
\beq\label{param.S.lift}
B_1(s) = (1-s) C_1 + s C_{1'}, \qquad B_2(t) = (1-t)C_2 + t C_{2'}.
\eeq
This oriented $2$-cycle fills in the frame and has oriented boundary
$$\d S = \gamma_1 + \gamma_{2'} - \gamma_{1'} - \gamma_2.$$
Here it will sometimes be useful to write
\beq\label{def-gammaC}
\gamma = \gamma_{\{C_1,C_2,C_{1'},C_{2'}\}} = \gamma_1 + \gamma_{2'} - \gamma_{1'} - \gamma_2,
\eeq
to emphasize the dependence on the given data.

\begin{rem}  Note that our incidence conditions (\ref{inci-1}), (\ref{inci-2}), and (\ref{inci-2}) are a subset of the incidence 
relations (4.2) and (4.6) imposed in \cite{ABMP}.  We did not check whether their additional conditions are consequences of the ones we impose.
\end{rem}

\section{The cycle integral}

Our main goal is to compute the integral 
$$ \int_S \ph_{KM}^o(x).$$

\subsection{Some geometry}

The first step is to determine the intersection of $S$ with the singular set $D_x$ of $\psi(x)$. 
For the moment, we consider only the generic case. 

\begin{defn}
A vector $x$ is {\bf regular} with respect to $\{C_1,C_{1'},C_2,C_{2'}\}$  if 
$(C,x)\ne 0$
for all $C\in \{C_1,C_{1'},C_2, C_{2'}\}$. 
\end{defn}

\begin{lem}\label{int-lemma} 
 (i) Suppose that $x$ is regular with respect to $\{C_1,C_{1'},C_2,C_{2'}\}$. \hfb
Then the set $D_x\cap S$ is non-empty if and only if $\P_2(x)\ne 0$, and, in this case, 
$$D_x\cap S = \sspan\{B_1(s_0), B_2(t_0)\},$$
where
\beq\label{null-point}   
s_0 = \frac{(x,C_1)}{(x,C_1)-(x,C_{1'})}, \qquad t_0= \frac{(x,C_2)}{(x,C_2)-(x,C_{2'})}.
\eeq
(ii) Suppose that $x\in V$ is any vector with $\P_2(x)\ne0$. Then, $D_x\cap S$ consists of a single point given by (\ref{null-point}).
\end{lem}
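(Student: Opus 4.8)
The plan is to reduce the membership condition $\phi(s,t)\in D_x$ to a pair of decoupled affine equations in $s$ and $t$, solve them, and then read off the constraint $(s,t)\in[0,1]^2$ directly from the signs that enter $\P_2(x)$. First I would observe that, since $\phi(s,t)=\sspan\{B_1(s),B_2(t)\}_{\text{p.o.}}$, the plane $\phi(s,t)$ lies in $D_x$ exactly when $x$ is orthogonal to the spanning pair, i.e.
$$(x,B_1(s))=0\qquad\text{and}\qquad (x,B_2(t))=0.$$
Using $B_1(s)=(1-s)C_1+sC_{1'}$ and $B_2(t)=(1-t)C_2+tC_{2'}$, these two conditions decouple, the first affine in $s$ and the second affine in $t$:
$$(x,B_1(s))=(x,C_1)+s\big[(x,C_{1'})-(x,C_1)\big],\qquad (x,B_2(t))=(x,C_2)+t\big[(x,C_{2'})-(x,C_2)\big].$$
Whenever the slope $(x,C_{1'})-(x,C_1)$ is nonzero there is a unique root $s=s_0$, and it is precisely the value in (\ref{null-point}); likewise for $t_0$. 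If a slope vanishes, the corresponding factor of $\P_2(x)$ vanishes as well, a case handled below.

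Next I would determine when $s_0\in[0,1]$. The clean device is the identity
$$s_0(1-s_0)=\frac{-(x,C_1)(x,C_{1'})}{\big((x,C_1)-(x,C_{1'})\big)^2},$$
whose denominator is positive. Hence $s_0\in[0,1]$ if and only if $(x,C_1)(x,C_{1'})\le 0$, with $s_0\in(0,1)$ precisely when this product is strictly negative. In sign language, $s_0\in(0,1)$ iff $\sgn(x,C_1)\ne\sgn(x,C_{1'})$ with both factors nonzero, while $s_0\in\{0,1\}$ occurs exactly when one of $(x,C_1),(x,C_{1'})$ vanishes and the other does not. The same analysis applies verbatim to $t_0$ and the pair $(x,C_2),(x,C_{2'})$.

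Now I would assemble the two parts. For (i), the regularity hypothesis makes every inner product nonzero, so each sign is $\pm 1$; then $\sgn(x,C_1)-\sgn(x,C_{1'})$ is $0$ when the signs agree and $\pm 2$ when they differ, and similarly for the second pair. Thus $\P_2(x)\ne 0$ iff both sign-differences are nonzero iff $s_0,t_0\in(0,1)$, which gives $D_x\cap S=\{\phi(s_0,t_0)\}$; and when $\P_2(x)=0$ at least one slope forces either no root or a root outside $[0,1]$, so no point of $S$ meets $D_x$ and $D_x\cap S=\emptyset$. For (ii), $\P_2(x)\ne 0$ forces both sign-differences to be nonzero, which rules out the degenerate coincidence $(x,C_1)=(x,C_{1'})$ and its analogue; so $s_0,t_0$ are well defined, and by the sign analysis both lie in $[0,1]$, possibly at an endpoint when an inner product vanishes. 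In either case the solution $(s_0,t_0)$ is unique, so $D_x\cap S$ reduces to the single point $\phi(s_0,t_0)=\sspan\{B_1(s_0),B_2(t_0)\}$.

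The only nonformal point is that $\phi(s_0,t_0)$ must genuinely be a point of $D$, i.e. $\sspan\{B_1(s_0),B_2(t_0)\}$ must be a negative $2$-plane; this is guaranteed because the parametrization $\phi$ of Section~3 maps all of $[0,1]^2$ into $D$, a fact that rests on the incidence relations (\ref{inci-1})--(\ref{inci-3}). I expect this verification --- that $S$ really lies in $D$ --- together with the bookkeeping of the non-regular boundary cases in (ii), to be the only genuine obstacle; everything else is the elementary linear algebra above.
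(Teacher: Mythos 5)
Your proposal is correct and follows essentially the same route as the paper: reduce membership of $\phi(s,t)$ in $D_x$ to the two decoupled affine equations $(x,B_1(s))=0=(x,B_2(t))$, solve for $s_0,t_0$, and match the condition $s_0,t_0\in[0,1]$ against the sign factors in $\P_2(x)$. Your identity $s_0(1-s_0)=-(x,C_1)(x,C_{1'})/\big((x,C_1)-(x,C_{1'})\big)^2$ just makes explicit the sign bookkeeping that the paper's proof asserts without detail.
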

\begin{proof}
A point  $\phi(s,t)=\sspan\{B_1(s), B_2(t)\}$ lies in $D_x\cap S$ precisely when 
\beq\label{null-point-eqs}
(1-s)(x,C_1)+ s (x, C_{1'})=0=(1-t)(x,C_2)+t (x,C_{2'}), \qquad s, t \in [0,1].
\eeq
Under the regularity assumption, if $(x,C_1)\ne (x,C_{1'})$ and $(x,C_2)\ne (x,C_{2'})$,  the given pair $s_0$, $t_0$ is the unique solution. 
These lie in $[0,1]$ precisely when $\P_2(x)\ne 0$. Indeed, under the regularity assumption, they both lie in $(0,1)$.  
If either $(x,C_1)= (x,C_{1'})$ or $(x,C_2)= (x,C_{2'})$, then there is no solution. 
\end{proof}

If $\P_2(x)\ne 0$ and for $\e>0$ sufficiently small so that $s_0\pm \e$ and $t_0\pm\e$ lie in $(0,1)$, let
\begin{align*}
C_1^\e(x) &= B_1(s_0-\e)\qquad C_{1'}^\e(x) = B_1(s_0+\e)\\
\nass
C_2^\e(x)&= B_2(t_0-\e)\qquad C_{2'}^\e(x) = B_2(t_0+\e).
\end{align*}
The collection of negative vectors $\{C_1^\e(x), C_2^\e(x), C_{1'}^\e(x), C_{2'}^\e(x)\}$ satisfy the same `incidence' conditions (\ref{inci-1}), (\ref{inci-2}), and (\ref{inci-2}) as the 
collection $\{C_1,C_2, C_{1'}, C_{2'}\}$ and determine a surface
$$S^\e(x):=S(C_1^\e(x), C_2^\e(x), C_{1'}^\e(x), C_{2'}^\e(x)) \ \subset\ S(C_1, C_2, C_{1'}, C_{2'}) = S,$$
containing the point $D_x\cap S = \phi(s_0,t_0)$. 
We write the boundary of $S^\e(x)$ as
$$\d\,S^\e(x) =  \gamma_1^\e(x) + \gamma_{2'}^\e(x) - \gamma_{1'}^\e(x) - \gamma_2^\e(x),$$
for curves defined by the analogues of (\ref{eq-gamma1}), etc.
Thus 
$$\d\, S^\e(x) = \gamma_{\{C_1^\e(x), C_2^\e(x), C_{1'}^\e(x), C_{2'}^\e(x)\}}= : \gamma^\e(x),$$
in the notation introduced in (\ref{def-gammaC}). 

Let 
$$
S'_\e = \begin{cases} S - \text{int}\,S^\e(x) &\text{if $\P_2(x)\ne0$}\\
\nass
S&\text{if $\P_2(x)=0$.}
\end{cases}
$$
so that 
$$
\d S'_\e = \begin{cases} \d S - \gamma^\e(x) &\text{if $\P_2(x)\ne0$}\\
\nass
\d S&\text{if $\P_2(x)=0$.}
\end{cases}
$$
Then by Stoke's theorem we have
$$
\int_{S'_\e} \ph_{KM}^o(x) = \int_{\d S'_\e} \psi(x).
$$

\subsection{The contribution of $\d S$}
We compute the integral of the $1$-form $\psi(x)$ around the boundary $\gamma = \d S$. 

First consider the integral over $\gamma_1$. 
The second expression in (\ref{eq-gamma1}) gives a lift of $\gamma_1$ to a curve in $\OFD$ with 
tangent vector 
$$\eta(t) = [\eta_1(t),\eta_2(t)]=  [\dot{\und{C}}_1, \dot{\und{B}}_{2\perp1}(t)] = [0, \dot{\und{B}}_{2\perp1}(t)].$$
Note that 
$$(\und{C}_1, \dot{\und{B}}_{2\perp1}(t))=0, \quad\text{ and } ( \und{B}_{2\perp1}(t), \dot{\und{B}}_{2\perp1}(t))=0,$$ 
since $( \und{B}_{2\perp1}(t), \und{B}_{2\perp1}(t))=-1$. 
Thus the term in $\psi(x)$ involving $(\zeta_2,d\zeta_1)$ vanishes along $\gamma_1$,  and we have simply
\begin{align}\label{gamma1-integral}
\int_{\gamma_1}\psi(x) & =  -\frac{1}{2\pi}\int_0^1  \frac{e^{-2\pi R}}{R}\,(x,\und{C}_1)(x, \dot{\und{B}}_{2\perp1}(t))\, dt,
\end{align}
where
$$R = (x,\und{C}_1)^2 + (x, \und{B}_{2\perp1}(t))^2.$$
Let $u = (x, \und{B}_{2\perp1}(t))$ so that $du = (x, \dot{\und{B}}_{2\perp1}(t))\, dt$, and we have
\begin{align*}
\int_{\gamma_1}\psi(x) & =  -\frac{1}{2\pi} \, (x,\und{C}_1)\, e^{-2\pi (x, \und{C}_1)^2}\,\int_\a^{\a'}  \frac{e^{-2\pi u^2}}{(x,\und{C}_1)^2 + u^2}\,du.
\end{align*}
where $\a = (x,\und{C}_{2\perp1})\quad\text{and}\quad \a'= (x,\und{C}_{2'\perp1})$. 

Now, recalling the function 
\beq\label{e2-int}
\tilde{e}_2(a,b) =  \frac{2}{\pi}\, b\,e^{-\pi b^2}\,\int_0^{a} e^{-\pi v^2}\,(b^2+v^2)^{-1}\,dv,
\eeq
defined in \cite{ABMP}, (3.25), we can write
\begin{align}\label{gamma1-integral}
4\,\int_{\gamma_1}\psi(x) & =  \tilde e_2\big((x,\und{C}_{2\perp1})\sqrt{2},(x,\und{C}_1)\sqrt{2}\big) -\tilde e_2\big((x,\und{C}_{2'\perp1})\sqrt{2},(x,\und{C}_1)\sqrt{2}\big).
\end{align}

The values of the other integrals are obtained by permutation of indices. Note that there will be an additional change in sign 
in the $\gamma_2$ and $\gamma_{2'}$ integrals due to the fact noted above that the `fixed component' is then the second component.

\begin{prop}\label{Nice1} Under the regularity assumption, 
\begin{align*}
4\int_{\d S} \psi(x/\sqrt{2}) ={} &\tilde e_2\big((x,\und{C}_{2\perp1}),(x,\und{C}_1)\big)\ -\tilde e_2\big((x,\und{C}_{2'\perp1}),(x,\und{C}_1)\big)\\
\nass
-{}\ &\tilde e_2\big((x,\und{C}_{1\perp2'}),(x,\und{C}_{2'})\big) +\tilde e_2\big((x,\und{C}_{1'\perp2'}),(x,\und{C}_{2'})\big)\\
\nass
-{}\ &\tilde e_2\big((x,\und{C}_{2\perp1'}),(x,\und{C}_{1'})\big) +\tilde e_2\big((x,\und{C}_{2'\perp1'}),(x,\und{C}_{1'})\big)\\
\nass
+{}\ &\tilde e_2\big((x,\und{C}_{1\perp2}),(x,\und{C}_2)\big)\ -\tilde e_2\big((x,\und{C}_{1'\perp2}),(x,\und{C}_2)\big).
\end{align*}
\end{prop}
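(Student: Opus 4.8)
The plan is to evaluate the four edge integrals $\int_{\gamma_1}\psi(x)$, $\int_{\gamma_{1'}}\psi(x)$, $\int_{\gamma_2}\psi(x)$, $\int_{\gamma_{2'}}\psi(x)$ one at a time, each by the computation already carried out for $\gamma_1$ in deriving (\ref{gamma1-integral}), and then to assemble them according to the oriented boundary $\d S = \gamma_1 + \gamma_{2'} - \gamma_{1'} - \gamma_2$. First I note that the regularity hypothesis guarantees these integrals make sense: along $\gamma_1$ one has $R = (x,\und{C}_1)^2 + (x,\und{B}_{2\perp1}(t))^2 \geq (x,\und{C}_1)^2 > 0$, so the edge avoids the singular locus $D_x$ of $\psi(x)$, and likewise for the other three edges. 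The passage from $x$ to $x/\sqrt{2}$ in the statement is exactly what clears the factors $\sqrt{2}$ inside the arguments of $\tilde e_2$ in (\ref{gamma1-integral}), since $(x/\sqrt{2},\cdot)\,\sqrt{2} = (x,\cdot)$.

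For the two \emph{type-1} edges $\gamma_1$ and $\gamma_{1'}$, the lifts in (\ref{eq-gamma1}) keep the first component fixed ($\und{C}_1$, resp.\ $\und{C}_{1'}$) while the second moves in $V_1$ (resp.\ $V_{1'}$); the tangent is $[0,\dot{\und{B}}]$, so in (\ref{def-psi}) the term $(\zeta_2,d\zeta_1)$ vanishes and only the cross term $(x,\zeta_1)(x,d\zeta_2)$ survives. The substitution $u=(x,\und{B}_{2\perp1}(t))$ (resp.\ $u=(x,\und{B}_{2\perp1'}(t))$) then turns the integral into the antiderivative defining $\tilde e_2$ in (\ref{e2-int}), and for $\gamma_{1'}$ one gets the expression (\ref{gamma1-integral}) with $1$ replaced by $1'$ throughout.

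For the two \emph{type-2} edges $\gamma_2$ and $\gamma_{2'}$ the roles are reversed: the second component $\und{C}_2$ (resp.\ $\und{C}_{2'}$) is fixed and the first moves. Two points need care. The term $(\zeta_2,d\zeta_1)$ again drops out, now because $\zeta_1=\und{B}_{1\perp2}(s)$ lies in $V_2=C_2^\perp$, so $(\und{C}_2,\und{B}_{1\perp2})\equiv 0$ and hence $(\und{C}_2,d\und{B}_{1\perp2})=0$. More importantly, the surviving contribution of (\ref{def-psi}) now comes from the \emph{second} summand $-(x,\zeta_2)(x,d\zeta_1)$ rather than the first, so the resulting integral acquires an overall extra minus sign relative to the type-1 case; this is precisely the sign change anticipated after the definition of the $\gamma$'s.

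Assembling the four evaluations with the orientation signs of $\d S = \gamma_1 + \gamma_{2'} - \gamma_{1'} - \gamma_2$ then reproduces the four displayed lines verbatim. I expect the only real difficulty to be organized bookkeeping: one must simultaneously track the $\pm$ from the orientation of each edge, the $\pm$ from whether the moving vector occupies the first or the second slot, and the order in which the endpoints of the substitution variable $u$ are traversed along each edge (for instance from $(x,\und{C}_{2\perp1})$ to $(x,\und{C}_{2'\perp1})$ along $\gamma_1$). There is no analytic obstacle beyond the single substitution already performed for $\gamma_1$; the content of the proposition is that these signs conspire correctly.
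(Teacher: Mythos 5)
Your proposal is correct and follows essentially the same route as the paper: the paper computes $\int_{\gamma_1}\psi(x)$ via the substitution $u=(x,\und{B}_{2\perp1}(t))$ to land on $\tilde e_2$, obtains the other three edges by permuting indices with the extra sign for $\gamma_2,\gamma_{2'}$ coming from the moving vector sitting in the first slot, and sums according to $\d S=\gamma_1+\gamma_{2'}-\gamma_{1'}-\gamma_2$. Your added observations --- that regularity keeps each edge off $D_x$ since $R\ge(x,\und{C}_1)^2>0$, and that $(\zeta_2,d\zeta_1)$ vanishes on the type-2 edges because $\und{B}_{1\perp2}(s)\in C_2^\perp$ --- are correct details the paper leaves implicit.
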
 

In order to compare this quantity with what occurs in \cite{ABMP}, we note the identity which follows from equation (3.24) there
once the `boosting' is taken into account:
\begin{align}\label{Etoe2}
E_2(C_1,C_2;x) = -\tilde e_2((x,\und{C}_{1\perp2}), &(x,\und{C}_2)) -\tilde e_2((x,\und{C}_{2\perp1}), (x,\und{C}_1))\\
\nass
{}&\qquad\qquad\qquad + \sgn((x,\und{C}_{2}))\,\sgn((x,\und{C}_1)).\notag
\end{align}
Using this, we obtain a nice expression for our integral. 
\begin{cor}\label{cor3.3}
\begin{align*}
\int_{\d S} \psi(x/\sqrt{2}) &= \P_2(x)-\frac14\,\big(\, E_2(C_1,C_2;x) - E_2(C_1,C_{2'};x) \\
\nass
{}&\qquad\qquad\qquad\qquad -E_2(C_{1'},C_2;x) + E_2(C_{1'},C_{2'};x)\, \big).
\end{align*}
\end{cor}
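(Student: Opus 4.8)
The plan is to substitute the identity (\ref{Etoe2}) into the four boosted error functions on the right-hand side and to match the outcome against Proposition~\ref{Nice1}. Applying (\ref{Etoe2}) to each of $E_2(C_1,C_2;x)$, $E_2(C_1,C_{2'};x)$, $E_2(C_{1'},C_2;x)$ and $E_2(C_{1'},C_{2'};x)$ writes each one as a sum of two $\tilde e_2$-terms together with a product of two sign factors. Forming the alternating combination $E_2(C_1,C_2;x) - E_2(C_1,C_{2'};x) - E_2(C_{1'},C_2;x) + E_2(C_{1'},C_{2'};x)$ then splits it into an $\tilde e_2$-part and a sign-part, and I would treat these two pieces separately.

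For the $\tilde e_2$-part, the key point is that each of the eight terms $\tilde e_2((x,\und{C}_{i\perp j}),(x,\und{C}_i))$ produced by (\ref{Etoe2}) occurs exactly once, and that the outer signs $+,-,-,+$ on the four $E_2$'s combine with the interior signs of (\ref{Etoe2}) to reproduce, term by term, the eight summands of $4\int_{\d S}\psi(x/\sqrt 2)$ in Proposition~\ref{Nice1}. For instance, the two $\tilde e_2$-terms coming from $E_2(C_1,C_2;x)$ carry bases $(x,\und{C}_2)$ and $(x,\und{C}_1)$ and hence match the $\gamma_2$ and $\gamma_1$ contributions, those from $E_2(C_1,C_{2'};x)$ match the $\gamma_{2'}$ and a second $\gamma_1$ contribution, and so on. The projections $\und{C}_{1\perp2}$, $\und{C}_{2\perp1}$, etc., line up precisely because the ``fixed component'' along $\gamma_1,\gamma_{1'}$ is the first and along $\gamma_2,\gamma_{2'}$ is the second, which is exactly the asymmetry recorded after (\ref{eq-gamma1}). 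The conclusion is that the $\tilde e_2$-part of $-\frac14$ times the alternating combination equals $\int_{\d S}\psi(x/\sqrt 2)$.

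For the sign-part, I would collect the four products $\sgn(x,\und{C}_i)\,\sgn(x,\und{C}_j)$ with their signs and observe that they factor as $\big(\sgn(x,\und{C}_1)-\sgn(x,\und{C}_{1'})\big)\big(\sgn(x,\und{C}_2)-\sgn(x,\und{C}_{2'})\big)$. Since $\und{C}=C/||C||$ with $||C||>0$, one has $\sgn(x,\und{C})=\sgn(x,C)$, so this product is exactly $4\,\P_2(x)$ by the definition (\ref{Phi2x}). Multiplying by $-\frac14$ produces $-\P_2(x)$, so that $-\frac14(\dots)=\int_{\d S}\psi(x/\sqrt 2)-\P_2(x)$; rearranging gives the asserted identity.

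The only real obstacle is the sign bookkeeping: one must check that all eight $\tilde e_2$-terms cancel against Proposition~\ref{Nice1} with the correct signs and that no spurious term survives, while keeping track of the extra sign reversal between the $\gamma_1,\gamma_{1'}$ and the $\gamma_2,\gamma_{2'}$ integrals. Since both Proposition~\ref{Nice1} and the identity (\ref{Etoe2}) are already available, there is no analytic content beyond this matching, and the computation is purely algebraic.
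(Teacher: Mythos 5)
Your proposal is correct and is essentially the paper's own proof: Corollary~\ref{cor3.3} is obtained exactly by substituting the identity (\ref{Etoe2}) into the alternating combination $E_2(C_1,C_2;x)-E_2(C_1,C_{2'};x)-E_2(C_{1'},C_2;x)+E_2(C_{1'},C_{2'};x)$, matching the resulting eight $\tilde e_2$-terms (with their signs) against Proposition~\ref{Nice1}, and factoring the four sign products into $4\,\P_2(x)$. The sign bookkeeping you flag does work out, including the extra reversal between the $\gamma_1,\gamma_{1'}$ and $\gamma_2,\gamma_{2'}$ contributions, so nothing further is needed.
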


\subsection{The contribution of the singular point} 

The integral of $\psi(x)$ around $\d S^\e(x)$ is given by the expression in Corollary~\ref{cor3.3} 
with the collection $\{C_1,C_2, C_{1'}, C_{2'}\}$ replaced by $\{C_1^\e(x), C_2^\e(x), C_{1'}^\e(x), C_{2'}^\e(x)\}$. 
Thus we have 
\begin{align*}
\int_{\d S^\e(x)} \psi(x/\sqrt{2}) &= \P_2^\e(x)-\frac14\,\big(\, E_2(C_1^\e,C_2^\e;x) - E_2(C_1^\e,C_{2'}^\e;x) -E_2(C_{1'}^\e,C_2^\e;x) + E_2(C_{1'}^\e,C_{2'}^\e;x)\,\big),
\end{align*}
where we have written $C_1^\e$ in place of $C_1^\e(x)$ etc., to lighten the notation. 
Note that at $\e=0$ we have $C_1^0 = C_{1'}^0= B_1(s_0)$ and $C_2^0 = C_{2'}^0=B_2(t_0)$. Thus the sum of the $E_2$ terms vanishes in the limit as $\e$ goes to $0$. 
Also, as $\e$ runs over $[s_0,0)$, the vector $C_1^\e(x)$ runs from 
$C_1$ to $C_1^0(x)$ and the quantity $(x,C_1^\e(x))$ does not vanish and, in particular, does not change sign along this path. Similarly for the other corners. 
Thus $\lim_{\e\rightarrow 0} \P_2^\e(x) = \P_2(x)$, and we have the following.
\begin{cor}\label{cor3.4}
$$\lim_{\e\rightarrow 0} \int_{\d S^\e(x)} \psi(x/\sqrt{2}) = \P_2(x).$$
\end{cor}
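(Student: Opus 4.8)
The plan is to compute $\lim_{\e\to 0}\int_{\d S^\e(x)}\psi(x/\sqrt2)$ directly from the formula of Corollary~\ref{cor3.3} applied to the shrinking frame $\{C_1^\e(x),C_2^\e(x),C_{1'}^\e(x),C_{2'}^\e(x)\}$, and to separate the limit into its two natural pieces: the four $E_2$-terms and the sign term $\P_2^\e(x)$. The key observation is that as $\e\to 0$ the four shrunken corner vectors coalesce in pairs, $C_1^\e,C_{1'}^\e\to B_1(s_0)$ and $C_2^\e,C_{2'}^\e\to B_2(t_0)$, so that each of the four generalized error functions $E_2(C_i^\e,C_j^\e;x)$ has coincident limiting arguments.

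First I would record that Corollary~\ref{cor3.3}, being an identity valid for any admissible data, applies verbatim to the frame $\{C_1^\e(x),\dots\}$, since (as stated before the corollary) these vectors satisfy the same incidence relations (\ref{inci-1}), (\ref{inci-2}), (\ref{inci-3}). This is exactly what gives the displayed expression for $\int_{\d S^\e(x)}\psi(x/\sqrt2)$. Then I would treat the $E_2$-contribution: since $E_2(C,C';x)$ is continuous in its vector arguments on the relevant locus, the limit is
\[
E_2(B_1(s_0),B_2(t_0);x)-E_2(B_1(s_0),B_2(t_0);x)-E_2(B_1(s_0),B_2(t_0);x)+E_2(B_1(s_0),B_2(t_0);x)=0,
\]
because all four $E_2$-terms converge to the single value $E_2(B_1(s_0),B_2(t_0);x)$ with alternating signs summing to zero. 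Hence the entire $E_2$-bracket vanishes in the limit, leaving only $\lim_{\e\to0}\P_2^\e(x)$.

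Next I would show $\lim_{\e\to0}\P_2^\e(x)=\P_2(x)$. Here I use that $\P_2^\e(x)$ is built (via (\ref{Phi2x})) from the signs $\sgn(x,C_1^\e),\sgn(x,C_{1'}^\e),\sgn(x,C_2^\e),\sgn(x,C_{2'}^\e)$. As $\e$ decreases from $s_0$ (resp.\ $t_0$) to $0$, the path $C_1^\e(x)=B_1(s_0-\e)$ moves $C_1$ toward $B_1(s_0)$, and by Lemma~\ref{int-lemma} the linear function $s\mapsto (1-s)(x,C_1)+s(x,C_{1'})$ vanishes only at $s=s_0$; since $s_0-\e<s_0$ for $\e>0$, the quantity $(x,C_1^\e(x))$ keeps the sign of $(x,B_1(0))=(x,C_1)$ and never vanishes along the path. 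The same argument applied to each of the four corners shows $\sgn(x,C_1^\e)=\sgn(x,C_1)$, $\sgn(x,C_{1'}^\e)=\sgn(x,C_{1'})$, and similarly for the $2$-indices, so $\P_2^\e(x)=\P_2(x)$ for all small $\e>0$ and the limit is $\P_2(x)$.

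The main subtlety, and the only point requiring care, is the continuity of $E_2$ at the coincident-argument limit: I must check that $E_2(C_i^\e,C_j^\e;x)$ really does converge to a common finite value as the two vector arguments merge, rather than developing a singularity (the definition (3.38) of \cite{ABMP} involves $\Delta$ of the two vectors in denominators, and one should confirm this stays bounded, which it does since $B_1(s_0)$ and $B_2(t_0)$ still span a nondegenerate negative $2$-plane under the incidence hypotheses). Given that continuity, the alternating cancellation is immediate and no delicate estimate on the rate of shrinking is needed; the result is robust and purely a matter of evaluating a continuous function along the pinching family, so I expect no genuine obstacle beyond verifying this boundedness.
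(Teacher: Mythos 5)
Your proposal is correct and follows essentially the same route as the paper: apply Corollary~\ref{cor3.3} to the shrunken frame, observe that the four $E_2$-terms coalesce to a common value $E_2(B_1(s_0),B_2(t_0);x)$ and cancel with signs $+1-1-1+1=0$, and note that $\P_2^\e(x)=\P_2(x)$ for small $\e$ because the linear functions $s\mapsto(x,B_1(s))$ and $t\mapsto(x,B_2(t))$ vanish only at $s_0$ and $t_0$, so no sign changes occur along the corner paths. Your extra remark on verifying continuity of $E_2$ at the coincident limit (via $\Delta(B_1(s_0),B_2(t_0))>0$) is a worthwhile precision that the paper defers to the smoothness of $E_2$ established in section~\ref{section-irregular}.
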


Combining Corollary~\ref{cor3.3} and Corollary~\ref{cor3.4}, we obtain the assertion of Theorem B(i) in the regular case. 
\begin{cor}\label{cor4.6} Suppose that $x$ is regular with respect to $\{C_1,C_{1'},C_2,C_{2'}\}$. 
Then
$$\int_S \ph^o_{KM}(x/\sqrt{2}) =  - \frac14\,\big(\ E_2(C_1,C_2;x) - E_2(C_1,C_{2'};x) -E_2(C_{1'},C_2;x) + E_2(C_{1'},C_{2'};x)\ \big)$$
\end{cor}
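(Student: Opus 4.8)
The plan is to assemble Corollary~\ref{cor4.6} from the two contributions already computed, treating the regular case by a careful application of Stokes' theorem on the excised surface $S'_\e$.

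First I would fix a regular $x$ and split into the two cases $\P_2(x)=0$ and $\P_2(x)\ne0$, guided by Lemma~\ref{int-lemma}. In the first case $D_x\cap S$ is empty, so $\psi(x)$ is a smooth $1$-form on all of $S$, the excision is unnecessary ($S'_\e = S$), and Proposition~\ref{dpsi} combined with Stokes gives directly
\[
\int_S \ph^o_{KM}(x/\sqrt2) = \int_{\d S}\psi(x/\sqrt2).
\]
Here I would invoke Corollary~\ref{cor3.3}; since $\P_2(x)=0$ the anomalous term drops out and the claimed $E_2$-expression is exactly what remains. So the $\P_2(x)=0$ subcase is immediate.

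The substantive subcase is $\P_2(x)\ne0$, where $D_x\cap S$ is the single interior point $\phi(s_0,t_0)$ and $\psi(x)$ is singular there. The plan is to apply Stokes on $S'_\e = S - \mathrm{int}\,S^\e(x)$, on which $\psi(x)$ is smooth, yielding
\[
\int_{S'_\e}\ph^o_{KM}(x/\sqrt2) = \int_{\d S}\psi(x/\sqrt2) - \int_{\d S^\e(x)}\psi(x/\sqrt2),
\]
using the oriented boundary decomposition $\d S'_\e = \d S - \gamma^\e(x)$ recorded before the statement. I would then let $\e\to0$. On the left, the integrand $\ph^o_{KM}(x/\sqrt2)$ is a smooth $2$-form on $S$ with no singularity (the singularity lives only in $\psi$, not in $\ph^o_{KM}$), so $\int_{S'_\e}\to\int_S$ by dominated convergence as the excised region $S^\e(x)$ shrinks to a point of measure zero. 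On the right, the first term is constant in $\e$ and given by Corollary~\ref{cor3.3}, while the second term tends to $\P_2(x)$ by Corollary~\ref{cor3.4}. Subtracting, the two $\P_2(x)$ contributions cancel exactly, leaving precisely the alternating sum of the four $E_2(C_i,C_{j};x)$ with the overall factor $-\tfrac14$.

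The main obstacle I expect is justifying the limit of the left-hand side, namely that the singular contribution of $\ph^o_{KM}(x/\sqrt2)$ over the shrinking disk $S^\e(x)$ vanishes. The cleanest route is to observe that $\ph^o_{KM}(x)$ extends smoothly across $D_x$ (its coefficients involve $e^{-2\pi R}$ and polynomials in the $(x,\cdot)$, with no $R^{-1}$), so the integral over $S^\e(x)$ is bounded by $\sup_{S^\e}\|\ph^o_{KM}(x/\sqrt2)\|$ times $\mathrm{area}(S^\e(x))\to0$; I would make this uniform using the explicit parametrization $\phi$ and the fact that $s_0\pm\e,\,t_0\pm\e$ stay in a fixed compact subinterval of $(0,1)$. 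A secondary point worth stating carefully is the sign bookkeeping in the boundary decomposition $\d S = \gamma_1 + \gamma_{2'} - \gamma_{1'} - \gamma_2$ and the matching orientation of $\gamma^\e(x)$, so that Corollaries~\ref{cor3.3} and \ref{cor3.4} are applied with consistent signs; but this is already encoded in the definition of $\gamma^\e(x)$ as $\gamma_{\{C_1^\e,C_2^\e,C_{1'}^\e,C_{2'}^\e\}}$, so it amounts to reusing Corollary~\ref{cor3.3} verbatim with the primed data and passing to the limit.
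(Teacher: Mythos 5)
Your proposal is correct and follows essentially the same route as the paper: Stokes' theorem on the excised surface $S'_\e$, Corollary~\ref{cor3.3} for the $\d S$ term, Corollary~\ref{cor3.4} for the shrinking inner boundary, and cancellation of the two $\P_2(x)$ contributions. The only additions are explicit justifications (the $\P_2(x)=0$ subcase and the convergence $\int_{S'_\e}\to\int_S$ via smoothness of $\ph^o_{KM}$) that the paper leaves implicit.
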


\subsection{The irregular case}\label{section-irregular}
Now suppose that $x$ is not regular with respect to the collection $\{C_1,C_2,C_{1'},C_{2'}\}$. 
Fix a vector $y\in V$ so that for all $\e>0$ sufficiently small, $x^\e=x+\e y$ is regular with respect to $\{C_1,C_2,C_{1'},C_{2'}\}$.
Then, by Corollary~\ref{cor4.6}, we have
\beq\label{shifted-x}
\int_S \ph^o_{KM}(x^\e/\sqrt{2}) =  - \frac14\,\big(\ E_2(C_1,C_2;x^\e) - E_2(C_1,C_{2'};x^\e) -E_2(C_{1'},C_2;x^\e) + E_2(C_{1'},C_{2'};x^\e)\ \big).
\eeq
But $\ph_{KM}^o(x)$ is a smooth $2$-form on $D$ which depends smoothly on the vector $x\in V$.  
Also, $E(C,C';x)$ is a smooth function on the space of parameters $\{x,C,C'\}$ where $x\in V$ and 
the negative vectors $C$, $C'$ span a negative $2$-plane. 
This follows from (3.38), (3.23) and Proposition~3.7 (ii) in \cite{ABMP}.
Thus, passing to the limit as $\e\rightarrow 0$ in (\ref{shifted-x}), we obtain the identity of Theorem~B(i) in the irregular case. 
This completes the proof of Theorem~B(i). 

\begin{rem} The point is that, both sides of the identity of Theorem~B(i) are smooth functions of parameters, and so their equality continues to 
hold by continuity on the closure of the set of parameters $\{x; C_1,C_2,C_{1'},C_{2'}\}$ where $x$ is regular with respect to $\{C_1,C_2,C_{1'},C_{2'}\}$. 
\end{rem}

Next note that 
$$E_2(C, C';0) = \Arctan\left( \frac{(C,C')}{\sqrt{\Delta(C,C')}} \right).$$
This follows from \cite{ABMP}, (3.23) together with the fact that $e_2(0,0)=0$ and the boosting procedure. 
As a consequence, we obtain the following nice formula.
\begin{cor}\label{cor4.4}
\begin{align*}
-\int_{S}\ph_{KM}^o(0) &=\frac1{4\pi}\,\int_{S} \O\\
\nass
{}& = \Arctan\left( \frac{(C_1,C_2)}{\sqrt{\Delta_{12}}} \right)-\Arctan\left( \frac{(C_1,C_{2'})}{\sqrt{\Delta_{12'}}} \right) \\
\nass
{}&\qquad\qquad
- \Arctan\left( \frac{(C_{1'},C_2)}{\sqrt{\Delta_{1'2}}} \right) + \Arctan\left( \frac{(C_{1'},C_{2'})}{\sqrt{\Delta_{1'2'}}} \right) .
\end{align*}
\end{cor}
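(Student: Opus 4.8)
The plan is to obtain Corollary~\ref{cor4.4} as the specialization of Corollary~\ref{cor4.6} to $x=0$, together with a direct pointwise evaluation of $\ph_{KM}^o$ at the origin for the first equality. Neither step requires new analysis, so the proof will be short; the interest lies in identifying the correct constants and in the geometric meaning of the result.

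First I would dispose of the equality $-\int_S \ph_{KM}^o(0) = \frac{1}{4\pi}\int_S\O$, which is purely a pointwise fact about the form $\ph_{KM}^o$ and involves no integration theory. Since $\o_1(x)\wedge\o_2(x)$ is quadratic in $x$ and hence vanishes at $x=0$, and the majorant exponent $R(x,z)$ also vanishes there, the defining formula for $\ph_{KM}^o$ collapses at the origin to the $\O$-term alone, giving the noted proportionality $\ph_{KM}^o(0)=-\tfrac{1}{2\pi}\O$. Integrating over $S$ and carrying the overall sign then yields the first equality; the one thing to watch here is the bookkeeping of the normalizing constant, which comes from the $\tfrac{1}{4\pi}\O$ term together with the overall factor $2$ in the definition of $\ph_{KM}$.

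For the second equality I would set $x=0$ in Corollary~\ref{cor4.6} (so that the rescaled argument $x/\sqrt2$ is again $0$), reducing the problem to evaluating the four generalized error functions $E_2(C,C';x)$ at $x=0$. The key input is the closed form $E_2(C,C';0)=\Arctan\!\big((C,C')/\sqrt{\Delta(C,C')}\big)$, which I would justify exactly as the text indicates: from the definition of $E_2$ in terms of the building block $e_2$ via \cite{ABMP} (3.23), using $e_2(0,0)=0$ and the boosting procedure. Substituting this value into the alternating four-term combination and distributing the outer sign produces precisely the displayed sum of four $\Arctan$ terms attached to the corners $z_{12},z_{12'},z_{1'2},z_{1'2'}$.

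The step carrying the real (if modest) weight is thus the evaluation $E_2(C,C';0)=\Arctan\!\big((C,C')/\sqrt{\Delta(C,C')}\big)$, since this is where the analytic content of \cite{ABMP} enters; everything else is substitution and careful sign/constant bookkeeping. As a geometric consistency check I would observe that $\tfrac{1}{4\pi}\int_S\O$ is a normalization of the symplectic (hyperbolic) area of the geodesic quadrilateral $S$, so the identity is a Gauss--Bonnet--type statement expressing this area as an alternating sum of angle-like $\Arctan$ quantities at the four corners; in principle one could verify it independently by a direct area computation, which gives a useful cross-check on the normalizing constants.
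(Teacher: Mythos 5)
Your overall route is the same as the paper's: evaluate $\ph_{KM}^o$ pointwise at $x=0$ for the first equality, and specialize the four-term $E_2$ identity to $x=0$ using $E_2(C,C';0)=\Arctan\big((C,C')/\sqrt{\Delta(C,C')}\big)$ for the second. But there is one genuine gap in the way you invoke the main integral formula. You propose to ``set $x=0$ in Corollary~\ref{cor4.6}'', and that corollary is stated only under the hypothesis that $x$ is \emph{regular} with respect to $\{C_1,C_{1'},C_2,C_{2'}\}$, i.e.\ $(C,x)\neq 0$ for all four $C$'s. The vector $x=0$ fails this in the worst possible way: $(C,0)=0$ for every $C$, so it is maximally irregular, and the Stokes argument behind Corollary~\ref{cor4.6} (which uses the primitive $\psi(x)$, singular along $D_x$, and the sign bookkeeping in $\P_2(x)$) does not apply to it directly. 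What is needed, and what the paper supplies in the subsection on the irregular case immediately before this corollary, is the extension of the identity to all $x$ by a perturbation argument: replace $x$ by $x^\e=x+\e y$ with $x^\e$ regular, apply Corollary~\ref{cor4.6} there, and pass to the limit $\e\to 0$ using the smoothness of both $\int_S\ph_{KM}^o(\,\cdot\,)$ and of $E_2(C,C';\,\cdot\,)$ in the parameter $x$. With that extension in hand, your substitution $x=0$ and the evaluation $E_2(C,C';0)=\Arctan\big((C,C')/\sqrt{\Delta(C,C')}\big)$ (justified exactly as you say, from (3.23) of the reference, $e_2(0,0)=0$, and boosting) completes the argument as in the paper.

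One further point of constant bookkeeping that your own computation surfaces: from $\ph_{KM}^o(0)=2\cdot(-\tfrac{1}{4\pi})\,\O=-\tfrac{1}{2\pi}\,\O$ (which agrees with the paper's earlier note $\ph_{KM}(0)=-\tfrac{1}{2\pi}\O$), one gets $-\int_S\ph_{KM}^o(0)=\tfrac{1}{2\pi}\int_S\O$ rather than the $\tfrac{1}{4\pi}\int_S\O$ appearing in the first displayed line; so either that line or your assertion that your pointwise evaluation ``yields the first equality'' needs to be reconciled. You should track the factor of $2$ in the definition of $\ph_{KM}$ and the factor $\tfrac14$ in the four-term $E_2$ combination through to the end rather than asserting that they match.
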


\subsection{An intersection number}
In this section, we give the proof of Proposition~\ref{prop.inter}.
We suppose that $x$ is regular with respect to $\{C_1,C_2,C_{1'},C_{2'}\}$ and that $\P_2(x)\ne 0$, so that, by Lemma~\ref{int-lemma}, 
$D_x\cap S$ consists of a single point $z_0$. If $\zeta \in \OFD$ with $\pi(\zeta)=z_0$ and we write $U(z_0) = z_0^\perp$, then 
$$T_{z_0}(D) \simeq U(z_0)^2, \quad T_{z_0}(D_x) \simeq (U(z_0)\cap x^\perp)^2,$$
and the normal to $D_x$ at $z_0$ is the subspace $(\R x)^2 \subset U(z_0)^2$.  If $e_1, \dots, e_{n-3}$ is a basis for $U(z_0)\cap x^\perp$, 
then the orientation of $D$ is given by the basis vector 
$$[e_1,0]\wedge[0,e_1]\wedge\dots \wedge [e_{n-3},0]\wedge [0,e_{n-3}]\wedge [x,0]\wedge[0,x] \ \in \ \wedge^{2(n-2)} T_{z_0}(D).$$
Note that this is the orientation determined by the complex structure in the hermitian model of $D$. 
If 
$$\phi_*(\frac{\d}{\d s})_{z_0} = [\eta_1,\eta_2], \qquad \phi_*(\frac{\d }{\d t})_{z_0} = [\mu_1,\mu_2],$$
is the tangent frame for the parametrized surface $S$ at the point $z_0$,
then the intersection number of $D_x$ and $S$ at $z_0$ is 
$$I(D_x,S) = \sgn\det\bpm (x,\eta_2)&(x, \eta_2)\\ (x,\mu_2)&(x,\mu_2)\epm.$$
Explicitly, write $B= [B_1(s),B_2(t)]$ and let
$P = -(B,B)$ so that $P\in \Sym_2(\R)$ is positive definite with symmetric square root $P^{\frac12}$. 
Let
$$\und{B} = [\uB_1(s,t),\uB_2(s,t)] = [B_1(s),B_2(t)] \, P^{-\frac12}.$$
Then 
\begin{align*}
\phi_*(\frac{\d}{\d s})_{z_0} &= [-C_1+C_{1'},0] \,P^{-\frac12} + [B_1(s_0),B_2(t_0)] \,\frac{\d}{\d s}\big( \,P^{-\frac12}\,\big)\\
\noalign{hence}
(x,\phi_*(\frac{\d}{\d s})_{z_0} )&= [(x,-C_1+C_{1'}),0] \,P^{-\frac12},\\
\nass
(x,\phi_*(\frac{\d}{\d t}_{z_0} )&= [0,(x,-C_2+C_{2'})] \,P^{-\frac12},
\end{align*}
where we have used the the fact that $(x,B_2(s_0))=(x,B_2(t_0))=0$. This gives
\begin{align*}
I(D_x,S) &= \sgn\det\bpm (x,-C_1+C_{1'})&0\\0&(x,-C_2+C_{2'})\epm\\
\nass
{}&= \sgn((x,C_{1'})-(x,C_1))\, \sgn((x,C_{2'})-(x,C_2))\\
\nass
{}&=\frac14\,[\,\sgn(x,C_1) - \sgn(x,C_{1'})\,][\,\sgn(x,C_2) - \sgn(x,C_{2'})\,],
\end{align*}
as claimed in Proposition~\ref{prop.inter}.

\subsection{Convergence of the holomorphic generating series}\label{hol-conv-section}

Suppose that the collection of negative vectors $\{\,C_1,C_{1'},C_2,C_{2'}\,\}$ satisfies the incidence relations
(\ref{inci-1}), (\ref{inci-2}), and (\ref{inci-2}) and let $S$ be resulting surface in $D$. 
\begin{lem} There exists a positive definite inner product $(\,,\,)_S$ on $V$ such that 
$$(x,x)_z\ge (x,x)_S, \qquad \forall z\in S, \ \forall x\in V.$$
\end{lem}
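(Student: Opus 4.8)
The plan is to exploit the compactness of $S$ together with the positive definiteness of each individual majorant, reducing the problem to a single comparison form. First I would recall from section~2 that for each fixed $z\in D$ the majorant $(x,x)_z = (x,x) + 2R(x,z)$ is a positive definite quadratic form on $V$. Indeed, writing $x = \pr_z(x) + x'$ with $x'\in U(z)=z^\perp$, orthogonality gives $(x,x)=(\pr_z(x),\pr_z(x))+(x',x')$, and since $R(x,z)=-(\pr_z(x),\pr_z(x))$ we get $(x,x)_z = R(x,z)+(x',x')$, a sum of two nonnegative terms (the first because $z$ is negative, the second because $U(z)$ is positive definite) which vanishes only at $x=0$. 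Moreover, since $\pr_z$ depends smoothly on $z$, the assignment $(z,x)\mapsto (x,x)_z$ is jointly continuous, in fact smooth, on $D\times V$.

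Next I would fix any base point $z_0\in S$ and consider the unit ellipsoid $\Sigma=\{x\in V : (x,x)_{z_0}=1\}$ of the positive definite form $(\,,\,)_{z_0}$; this $\Sigma$ is compact precisely because the form is positive definite. The surface $S$ is also compact, being the continuous image of $[0,1]^2$ under the parametrization $\phi$ of (\ref{param.S}). Hence $S\times\Sigma$ is compact, and the continuous function $(z,x)\mapsto (x,x)_z$ attains a minimum $c$ on it. Because every point of $\Sigma$ is nonzero and each majorant is positive definite, one has $c>0$.

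I would then set $(\,,\,)_S := c\,(\,,\,)_{z_0}$, which is positive definite. For $x\ne 0$ the rescaled vector $x/\sqrt{(x,x)_{z_0}}$ lies in $\Sigma$, so by homogeneity of the quadratic forms $(x,x)_z \ge c\,(x,x)_{z_0} = (x,x)_S$ for every $z\in S$, while the case $x=0$ is trivial. This yields the claimed uniform lower bound.

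The argument is at bottom a compactness exercise, so I do not anticipate a serious obstacle; the only points requiring care are the joint continuity of $(z,x)\mapsto (x,x)_z$ and the strict positivity $c>0$, and both follow directly from the explicit description of the majorant recalled above. The one modeling choice worth flagging is that we compare against the majorant at a point of $S$ itself; any fixed positive definite form on $V$ would serve equally well, and the resulting $(\,,\,)_S$ is of course not canonical.
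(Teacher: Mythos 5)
Your proof is correct and follows essentially the same route as the paper: both arguments reduce the claim to the positivity of the minimum of the jointly continuous function $(z,x)\mapsto (x,x)_z$ on the product of the compact surface $S$ with the compact unit sphere of a fixed positive definite form, and then rescale that form by the minimum. The only cosmetic difference is that you take the reference form to be the majorant at a base point of $S$ while the paper uses an arbitrary positive definite inner product, a distinction you correctly flag as immaterial.
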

\begin{proof} Let $(\,,\,)_+$ be any positive definite inner product on $V$ with unit sphere $\mathcal B_+$. 
Define a continuous  function 
$$f: D\lra \R_{>0}, \qquad f(z) = \min_{x\in \mathcal B_+} (x,x)_z.$$
The image $f(S)$ of the compact set $S$ is compact and has a lower bound $\nu\in \R>0$. Take $(\,,\,)_S = \nu (\,,\,)_+$.
\end{proof}

\begin{lem} Suppose that $x\in V$ with $\P_2(x)\ne0$. 
Then 
$$(x,x) \ge (x,x)_S.$$
\end{lem}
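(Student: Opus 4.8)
The plan is to find a single point $z_0\in S$ at which the majorant $(x,x)_{z_0}$ degenerates to the indefinite norm $(x,x)$, and then to read off the desired bound from the preceding compactness lemma evaluated at $z_0$.

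First I would invoke Lemma~\ref{int-lemma}(ii): since $\P_2(x)\ne0$, the intersection $D_x\cap S$ is nonempty and consists of the single point $z_0 = \sspan\{B_1(s_0),B_2(t_0)\}$ with $s_0,t_0$ given by (\ref{null-point}). No regularity hypothesis is needed, and the lemma already guarantees that $z_0$ is a genuine point of $S$: the values $s_0,t_0$ land in $[0,1]$, possibly on the boundary, precisely because $\P_2(x)\ne0$ forces both sign differences in (\ref{Phi2x}) to be nonzero.

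Next, because $z_0\in D_x$ the vector $x$ lies in $z_0^\perp$, so its orthogonal projection to $z_0$ vanishes, i.e. $\pr_{z_0}(x)=0$. Hence $R(x,z_0) = -(\pr_{z_0}(x),\pr_{z_0}(x)) = 0$, and the majorant at this point collapses to $(x,x)_{z_0} = (x,x) + 2R(x,z_0) = (x,x)$.

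Finally I would apply the previous lemma, which yields $(x,x)_z\ge (x,x)_S$ uniformly in $z\in S$, at the specific point $z=z_0\in S$. Combining with the previous display gives $(x,x) = (x,x)_{z_0}\ge (x,x)_S$, as required. I expect no real obstacle here: all the substantive work lives in Lemma~\ref{int-lemma} and in the compactness lemma, and the only point demanding care is confirming that the intersection point $z_0$ lies in the closed surface $S$ rather than outside it, which is exactly what Lemma~\ref{int-lemma}(ii) provides.
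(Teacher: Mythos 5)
Your argument is exactly the paper's: invoke Lemma~\ref{int-lemma}(ii) to get the single intersection point $z_0\in D_x\cap S$, note $R(x,z_0)=0$ so the majorant collapses to $(x,x)$ there, and apply the preceding compactness lemma at $z_0$. Correct and identical in substance to the paper's proof.
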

\begin{proof}
By Lemma~\ref{int-lemma}, the condition, $\P_2(x)\ne0$ implies that $D_x\cap S$ is a single point $z_0$. 
Thus $R(x,z_0)=0$ and we have 
$$(x,x) = (x,x)_{z_0} -2 R(x,z_0) = (x,x)_{z_0} \ge (x,x)_S,$$
as claimed. 
\end{proof}

\begin{prop} Assume that the collection of negative vectors $\{\,C_1,C_{1'},C_2,C_{2'}\,\}$
satisfies the incidence conditions (\ref{inci-1}), (\ref{inci-2}),  and (\ref{inci-3}). 
Then the series
$$\sum_{x\in L+\mu} \P_2(x)\,q^{\frac12(x,x)}$$
is termwise absolutely convergent. 
\end{prop}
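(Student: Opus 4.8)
The plan is to combine the two preceding lemmas, which together show that the condition $\P_2(x)\ne 0$ forces a strong lower bound on the quadratic form $(x,x)$, and then to observe that this lower bound is exactly what is needed to dominate the generating series by a convergent theta series for a positive definite form.

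First I would recall the exact statement to be summed. By the definition (\ref{Phi2x}), $\P_2(x)$ takes only the values $0,\pm\tfrac12,\pm1$, so in particular $|\P_2(x)|\le 1$ for all $x$, and the only terms contributing to the series are those with $\P_2(x)\ne 0$. Hence it suffices to bound
$$\sum_{\substack{x\in L+\mu\\ \P_2(x)\ne 0}} |q|^{\frac12(x,x)}.$$
For each such $x$, the second lemma gives $(x,x)\ge (x,x)_S$, where $(\,,\,)_S$ is the positive definite form produced in the first lemma. Writing $q=e^{2\pi i\tau}$ with $\tau=u+iv$ and $v>0$, we have $|q^{\frac12(x,x)}| = e^{-\pi v(x,x)}$, so termwise
$$\big|\P_2(x)\,q^{\frac12(x,x)}\big| \le e^{-\pi v(x,x)} \le e^{-\pi v(x,x)_S}.$$
Thus the series is dominated term by term by $\sum_{x\in L+\mu} e^{-\pi v(x,x)_S}$.

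Finally I would invoke the standard fact that, since $(\,,\,)_S$ is positive definite, $L+\mu$ is a shifted lattice of full rank, and $v>0$, the Gaussian sum $\sum_{x\in L+\mu} e^{-\pi v(x,x)_S}$ converges absolutely (it is a classical definite theta series, convergent for all $v>0$). Termwise absolute convergence of the original series follows by comparison.

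I do not expect a serious obstacle here, since the real content has already been isolated in the two lemmas: the geometric input (compactness of $S$ and positivity of the majorant) yields the uniform positive definite lower bound, and the arithmetic input (the sign-difference structure of $\P_2$) guarantees that every contributing vector satisfies $(x,x)\ge (x,x)_S$. The only point requiring a word of care is that the bound $(x,x)\ge (x,x)_S$ is asserted \emph{only} for vectors with $\P_2(x)\ne 0$, not for all of $L+\mu$; but this is harmless because $\P_2(x)=0$ kills precisely the remaining terms, so restricting the sum to $\P_2(x)\ne 0$ before comparing is exactly what makes the domination valid.
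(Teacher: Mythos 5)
Your proposal is correct and follows exactly the paper's own argument: the two preceding lemmas give $(x,x)\ge (x,x)_S$ whenever $\P_2(x)\ne 0$, and the series is then dominated by the convergent definite theta series $\sum_{x\in L+\mu}e^{-\pi v(x,x)_S}$. You have merely spelled out the bound $|\P_2(x)|\le 1$ and the restriction to the support of $\P_2$, which the paper leaves implicit.
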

\begin{proof} By the previous lemma, this series is dominated by the convergent series
$$\sum_{x\in L+\mu} \exp(-\pi v (x,x)_S).$$
\end{proof}

\begin{rem} This result shows that our incidence conditions 
are sufficient to 
establish the convergence obtained from conditions (4.2), Theorem~4.1 in \cite{ABMP}.
\end{rem}

\section{Additional remarks}

\subsection{Shadows}\label{section-shadows}
It is natural to consider the image of the non-holomorphic modular form $I_\mu(\tau;S)$ under the lowering operator
$L_k = -2i v^2\frac{\d}{\d \bar\tau}$ or, equivalently, under the $\xi$-operator.  This is done in \cite{ABMP}.  In this section, we explain how this image can be 
calculated  
using our theta integrals. 

We begin with the identity
$$-2i\,v^2\frac{\d}{\d \bar\tau}\{\,\ph_{KM}(\tau,x)\,\} = v^2\,q^{\frac12(x,x)} d\big(\ \frac{\d}{\d v}\{\psi(x\sqrt{v})\}\ \big).$$
Recalling (\ref{def-psi}), on $\OFD$ we have
\begin{align*} 
v^2\,\frac{\d}{\d v}\{\psi(x\sqrt{v})\}&=  v^2\,e^{-2\pi v R}\,\big(\ (\,(x,\zeta_1)(x,d\zeta_2)-(x,\zeta_2)(x,d\zeta_1) \,) - R\,(\zeta_2,d\zeta_1)\ \big)
\end{align*}
and its restriction to the connection subspace can be written as
$$v^2\frac{\d}{\d v}\{\psi(x\sqrt{v})\} = v\,\Psi_M^o(x\sqrt{v}),$$
where
\begin{align*} 
\Psi_M^o(x)&=  e^{-2\pi R}\,\big(\ (\,(x,\zeta_1)(x,d\zeta_2)-(x,\zeta_2)(x,d\zeta_1) \,)\ \big).
\end{align*}
Let
$$\Psi_M(\tau,x) = q^{\frac12(x,x)}\,\Psi_M^o(x\sqrt{v}),$$
a Schwartz function on $V$ valued in $1$-forms on $D$. 
Define the associated theta form 
$$\theta_\mu(\tau,\Psi_M) = \sum_{x\in L+\mu} \Psi_M(\tau,x).$$
Then
$$-2i\,v^2\,\frac{\d}{\d \bar\tau}\{\,\theta_\mu(\tau,\ph_{KM})\,\} =v\,d\, \theta_\mu(\tau,\Psi_M),$$
and hence the image of $I_\mu(\tau;S)$ under lowering can be written as an integral over the boundary of $S$,
\begin{align}\label{lower-theta-integral}
L_{\frac{n}2}I_\mu(\tau;S)=-2i\,v^2\frac{\d}{\d \bar\tau}\{\,I_\mu(\tau;S)\,\} 
{}&=v\,\int_{\d S} \theta_\mu(\tau,\Psi_M).
\end{align}

We could, of course, now
integrate termwise using the parametrization of $\d S$ as before and arrive at the formulas for the shadow given in \cite{ABMP}.
However, it is more enlightening to continue using the geometry. Recall that $\d S= \gamma_1+\gamma_{2'}-\gamma_{1'}-\gamma_2$.

We note that, by construction, the path $\gamma_1$ is the image in $D$ of a geodesic in the hyperbolic space 
$$D(V_1)=\{\,\nu\in V_1\mid (\nu,\nu)=-1\,\}$$
in $V_1$ under the inclusion
$$j_1:D(V_1)\lra D, \qquad \nu \ \mapsto\ \sspan\{\und{C_1},\nu\}_{\text{p.o.}}.$$
For the  
decomposition 
$$V = \R\,C_1 + V_1,$$
we write 
$$x = x_0 + x_{1},\qquad x_0=-(x,\und{C}_1)\,\und{C}_1,$$
and we note that 
$$R(x,j_1(\nu)) = (x,\und{C}_1)^2 + R_{1}(x_1,\nu),$$
where
$$R_{1}(x_{1},\nu) = (x_{1},\nu)^2.$$

\begin{lem} Let $\ph_{KM}^{(n-2,1)}(\tau,x_1)$ be the Schwartz form associated to the space $V_1$ of signature $(n-2,1)$. 
It is a closed $1$-form on $D(V_1)$ whose value on a tangent vector $\eta\in T_{\nu}(D(V_1))$ is
$$\ph_{KM}^{(n-2,1)}(\tau,x_1)(\eta) = v^{\frac12}(x_1,\eta)\, q^{\frac12(x_1,x_1)}\, e^{-2\pi v R_1(x_1,\nu)} .$$
Then
\beq\label{inductive-formula}
v\,j_1^*\Psi_M(\tau,x)  =  v^{\frac32}\,(x_0,\und{C}_1)\,(\bar{q})^{-\frac12(x_0,x_0) }\, \ph_{KM}^{(n-2,1)}(\tau,x_1).
\eeq
\end{lem}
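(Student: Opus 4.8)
The plan is to pull $\Psi_M^o$ back along $j_1$ using the tautological orthonormal lift of $j_1$ to $\OFD$, and then to repackage the resulting $v$- and $\tau$-dependence, the only substantive point being a sign identity that converts a holomorphic Gaussian into an anti-holomorphic one.

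First I would lift $j_1$ to $\tilde j_1\colon D(V_1)\to\OFD$, $\nu\mapsto[\und{C}_1,\nu]$; this genuinely lands in $\OFD$ since $(\und{C}_1,\und{C}_1)=-1=(\nu,\nu)$ and $(\und{C}_1,\nu)=0$, and it covers $j_1$. Along $\tilde j_1$ the first frame vector $\zeta_1=\und{C}_1$ is constant, so $d\zeta_1=0$; hence in $\Psi_M^o(x)=e^{-2\pi R}\big((x,\zeta_1)(x,d\zeta_2)-(x,\zeta_2)(x,d\zeta_1)\big)$ the second term drops out, and no vertical/connection term can contribute. Only $(x,\und{C}_1)(x,d\nu)$ survives. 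Since $x_0\in\R C_1$ and $d\nu$ takes values in $V_1=C_1^\perp$, we have $(x,d\nu)=(x_1,d\nu)$ while $(x,\und{C}_1)=(x_0,\und{C}_1)$; using $R(x\sqrt v,j_1(\nu))=v\big[(x,\und{C}_1)^2+R_1(x_1,\nu)\big]$ from the text together with $(x\sqrt v,\cdot)=\sqrt v\,(x,\cdot)$ this gives
$$j_1^*\Psi_M^o(x\sqrt v)=v\,e^{-2\pi v(x,\und{C}_1)^2}\,e^{-2\pi v R_1(x_1,\nu)}\,(x,\und{C}_1)\,(x_1,d\nu).$$

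Next I would multiply by the prefactor $q^{\frac12(x,x)}$ and the extra $v$, and reorganize the scalar factors. Because $V=\R C_1\oplus V_1$ is orthogonal, $(x,x)=(x_0,x_0)+(x_1,x_1)$, so $q^{\frac12(x,x)}=q^{\frac12(x_0,x_0)}q^{\frac12(x_1,x_1)}$; the factor $q^{\frac12(x_1,x_1)}$, together with $v^{1/2}(x_1,d\nu)$ and $e^{-2\pi vR_1(x_1,\nu)}$, is precisely $\ph_{KM}^{(n-2,1)}(\tau,x_1)$. The heart of the computation is the remaining identity
$$q^{\frac12(x_0,x_0)}\,e^{-2\pi v(x,\und{C}_1)^2}=(\bar q)^{-\frac12(x_0,x_0)},$$
which I would verify by substituting the key relation $(x_0,x_0)=-(x,\und{C}_1)^2$ — immediate from $x_0=-(x,\und{C}_1)\und{C}_1$ and $(\und{C}_1,\und{C}_1)=-1$ — to get $q^{\frac12(x_0,x_0)}e^{2\pi v(x_0,x_0)}=e^{(x_0,x_0)(\pi i\tau+2\pi v)}$, and then observing $\pi i\tau+2\pi v=\pi i\bar\tau$ for $\tau=u+iv$, which is exactly $(\bar q)^{-\frac12(x_0,x_0)}$. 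It is this sign that turns the holomorphic Gaussian $q^{\frac12(x_0,x_0)}$ times the real Gaussian $e^{-2\pi v(x,\und{C}_1)^2}$ into the anti-holomorphic factor of (\ref{inductive-formula}). Collecting the powers of $v$ — one from the explicit prefactor, one from the quadratic scaling of $\Psi_M^o$, less the $v^{1/2}$ carried inside $\ph_{KM}^{(n-2,1)}$ — produces the overall $v^{3/2}$, and $(x,\und{C}_1)=(x_0,\und{C}_1)$ supplies the stated coefficient.

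Finally, identifying the displayed $1$-form with the Kudla–Millson form $\ph_{KM}^{(n-2,1)}$ of the signature $(n-2,1)$ space $V_1$ is just the specialization of the construction of Section 2 (following \cite{KM.I}, \cite{KM.II}) to a space with a single negative direction, where the Schwartz form has degree $1$; no new input is needed. The main obstacle is essentially bookkeeping — keeping the powers of $v$ and the holomorphic versus anti-holomorphic $q$-factors straight — with the one genuinely substantive step being the Gaussian-flip identity above, which is precisely where the anti-holomorphicity of the shadow originates.
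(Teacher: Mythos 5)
Your proposal is correct and follows essentially the same route as the paper: pull back along the orthonormal lift $\nu\mapsto[\und{C}_1,\nu]$ (so the tangent image is $[0,\eta]$ and the $d\zeta_1$-term drops), split $R$ and $q^{\frac12(x,x)}$ along $V=\R C_1\oplus V_1$, and recombine $q^{\frac12(x_0,x_0)}e^{-2\pi v(x,\und{C}_1)^2}=(\bar q)^{-\frac12(x_0,x_0)}$ via $(x_0,x_0)=-(x,\und{C}_1)^2$. The paper's proof is just the one-line pullback formula; you have supplied the same computation with the bookkeeping made explicit.
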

\begin{proof}
If $\nu\in D(V_1)$,  there is an identification
$$T_\nu(D(V_1)) = \{ \eta\in V_1\mid (\nu,\eta)=0\}.$$
The image of the tangent vector $\eta\in T_\nu(H_1)$ 
in $T_{j_1(\nu)}(D)$ is the vector $[0,\eta]$ where we have used the point $[\und{C}_1,\nu]\in \OFD$ in the identification
of $T_{j_1(\nu)}(D)\simeq U(j_1(\nu))^2$.
Then 
\begin{align*} 
j_1^*\Psi_M(\tau,x)(\eta)= q^{\frac12 (x_0,x_0) +\frac12 (x_1,x_1)}\,e^{-2\pi v [(x,\und{C}_1)^2+R_1(x_1,\nu)]}\,v^{\frac12}(x,\und{C}_1)\,v^{\frac12} (x_1,\eta).
\end{align*}

\end{proof}
Now assume that $C_1$ is a rational vector and let 
$$L_0=L\cap \Q \,C_1,\qquad L_1= L\cap V_1,$$ 
so that 
$$L_0+L_1\subset L \subset L^\vee\subset L_0^\vee + L_1^\vee.$$
Then, we can write any coset as 
$$\mu+L = \bigsqcup_{\l = \l_0+\l_1} (\l_0+L_0)+(\l_1+L_1),$$
and we have
$$v\,j_1^*\theta_\mu(\tau,\Psi_M) = \sum_{\l} v^{\frac32}\,\overline{ \theta_{0,\l_0}(\tau)}\,\theta_{\l_1}(\tau,\ph_{KM}^{(n-2,1)}).$$
Here 
$$\theta_{0,\l_0}(\tau) = \sum_{x_0\in \l_0+L_0} (x_0,\und{C}_1)\, q^{-\frac12(x_0,x_0)}$$
is the theta series of weight $\frac32$ attached to the coset $\l_0+L_0$ and the positive definite unary form $-(x_0,x_0)$. 
The form $v^{\frac32}\,\overline{\theta_0(\tau)}$ then has weight $-3/2$, while $\theta_{\l_1}(\tau,\ph_{KM}^{(n-2,1)})$ has weight $(n-1)/2$ so the product 
has weight $n/2 -2$, as required. 

But now
\begin{align}\label{inductive-formula}
v\,\int_{\gamma_1} \theta_\mu(\tau,\Psi_M) & =  \sum_{\l} v^{\frac32}\,\overline{ \theta_{0,\l_0}(\tau)}\, 
\int_{\gamma(C_{2\perp1},C_{2'\perp1})}\theta_{\l_1}(\tau,\ph_{KM}^{(n-2,1)}),
\end{align}
where the right side is a linear combination of products of weight $-\frac32$ antiholomorphic unary theta series times the Zwegers' indefinite theta series
which arise as integrals of the $\theta_{\l_1}(\tau,\ph_{KM}^{(n-2,1)})$'s along the geodesic $\gamma(C_{2\perp1},C_{2'\perp1})$ in $D(V_1)$. 
The remaining terms in the expression (\ref{lower-theta-integral}) for $L_{n/2} I_\mu(\tau;S)$ as an integral over $\d S$ 
have the same form.
This formula reveals the inductive nature of the shadow.

\subsection{Other signatures}\label{section-other-sigs}
The case in which $V$ has signature $(n-r,r)$ is considered in section~6 of \cite{ABMP}.  As remarked in the introduction, the relevant Schwartz forms 
$\ph_{KM}^{(n-r,r)}(x)$ are considered in \cite{KM.I}, \cite{KM.II} and \cite{KM.IHES}.  In the present section, we indicated the first steps in generalizing 
the constructions of earlier sections, leaving details for another occassion. 

Let $D$ be the space of oriented negative $r$-planes in $V$.  

We suppose that a collection of $r$ pairs of negative vectors $\{C_1,C_1'\}, \dots, \{C_r,C_r'\}$ is given with the following incidence properties. 
For a subset $I\subset \{1, \dots,r\}$, let $C_I$ be the ordered set of $r$ vectors where we take $C'_j\in C_I $ if $j\in I$ and $C_j\in C_I$ if $j\notin I$
and the vectors are ordered according to the index $j$. 
Thus, $C_{\emptyset} = \{C_1,\dots,C_r\}$, etc.  
We require
\begin{itemize}
\item[(i)]
Each collection $C_I$ spans a negative $r$-plane
$$P_I = \sspan\{C_I\}.$$
\item[(ii)]
These planes are distinct, so that there are $2^r$ of them. 
\item[(iii)] The oriented negative $r$-planes 
$$z_I = \sspan\{C_I\}_{\text{p.o.}}$$
all lie on the same component $D^+$ of $D$. 
\end{itemize}
We will not attempt express these in terms of determinants of minors of Gram matrices, although this is clearly possible. 
Note that the conditions can be expressed inductively. 
We use the notation $C_{j'}=C_j'$, as before. For example, for $r=3$, the collection of vectors 
$\{C_{2\perp1},C_{2'\perp1},C_{3\perp1}, C_{3'\perp1}\}$ 
should satisfy the incidence relations analogous to (\ref{inci-1}), (\ref{inci-2}) and (\ref{inci-3}), and similarly for the other $5$ `projections'.
Thus, the set of $2^r$ points $\{z_I\}$ form the vertices of a geodesic hypercube $S$ in $D$. 
This cube can be parametrized by
$$\phi:[0,1]^r \isoarrow S\subset D, \qquad s=[s_1,\dots,s_r] \mapsto \sspan\{B_1(s_1),\dots,B_r(s_r)\},$$
where 
$$B_j(s_j) = (1-s_j)C_j + s_j C_{j'}.$$

For a vector $x\in V$, let 
$$\P_r(x)= \frac{1}{2^r}\,[\,\sgn(x,C_1) - \sgn(x,C_{1'})\,]\dots [\,\sgn(x,C_r) - \sgn(x,C_{r'})\,],$$
as in (6.5) of \cite{ABMP}.  Lemma~\ref{int-lemma} and the arguments of section~\ref{hol-conv-section} carry over immediately and
yield the following. Here 
$$D_x = \{ \, z\in D\mid x\perp z\ \}$$
is an oriented\footnote{The orientation is defined as follows: Suppose that an orientation of $D$ has been fixed. 
Then, at a point $z\in D_x$, we have $T_z(D)= T_z(D_x)+ N_z(D_x)$ where, under the identification $T_z(D) = \Hom(z,z^\perp)$, 
the normal space $N_z(D_x) = \Hom(z,\R x)$. Since $z\in D$ is oriented, the basis vector $x$ in $\R x$ determines an orientation of 
$N_z(D_x)$ and hence of $T_z(D_x)$. } totally geodesic submanifold of $D$ of codimension $r$ and 
$L\subset L^\vee$ is an integral lattice in $V$. 

\begin{prop} (i) Suppose that $\P_r(x)\ne 0$. Then $D_x\cap S$ consists of a single point. \hfb
(ii) The $q$-series
\beq\label{hol-sig-r}
\sum_{x\in L+\mu} \P_r(x)\, e^{\frac12(x,x)},\qquad \mu\in L^\vee/L,
\eeq
is termwise absolutely convergent. 
\end{prop}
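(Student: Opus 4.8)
The plan is to reduce both parts to the machinery already established in the signature $(n-2,2)$ case, exploiting the inductive/product structure of $\P_r(x)$. For part (i), I would show that the condition for a point $\phi(s) = \sspan\{B_1(s_1),\dots,B_r(s_r)\}$ to lie in $D_x\cap S$ is that $x$ be orthogonal to each $B_j(s_j)$, i.e.\ that
\begin{equation}
(1-s_j)(x,C_j) + s_j (x,C_{j'}) = 0, \qquad j=1,\dots,r, \quad s_j\in[0,1].
\end{equation}
This decouples into $r$ independent scalar equations, exactly as in the proof of Lemma~\ref{int-lemma}. Each one has a unique solution $s_j^0 = (x,C_j)/((x,C_j)-(x,C_{j'}))$ lying in $[0,1]$ precisely when the factor $\sgn(x,C_j)-\sgn(x,C_{j'})$ is nonzero, and under regularity it lies in the open interval $(0,1)$. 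Hence $D_x\cap S\ne\emptyset$ iff every factor of $\P_r(x)$ is nonzero, and in that case the solution $(s_1^0,\dots,s_r^0)$ is unique, giving a single point. The one point requiring slight care is that $B_1(s_1^0),\dots,B_r(s_r^0)$ genuinely span a negative $r$-plane in $z_0$, so that $\phi$ is an immersion there; this follows from incidence property (i) applied along the interpolating family, since the $B_j(s_j)$ are convex combinations of the given negative vectors and the spanning condition is closed.

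For part (ii), the strategy is to prove the exact analogues of the two lemmas in section~\ref{hol-conv-section} in the general signature. First, since $S$ is the continuous image of the compact cube $[0,1]^r$ under $\phi$, it is compact in $D$, so the argument producing a positive definite inner product $(\,,\,)_S$ on $V$ with $(x,x)_z \ge (x,x)_S$ for all $z\in S$ and all $x\in V$ goes through verbatim: one minimizes the continuous function $f(z)=\min_{x\in\mathcal B_+}(x,x)_z$ over the compact set $S$ to obtain a positive lower bound $\nu$, and sets $(\,,\,)_S=\nu(\,,\,)_+$. Second, if $\P_r(x)\ne 0$ then by part (i) there is a point $z_0\in D_x\cap S$, so $R(x,z_0)=0$ and therefore
\begin{equation}
(x,x) = (x,x)_{z_0} - 2R(x,z_0) = (x,x)_{z_0} \ge (x,x)_S.
\end{equation}
Consequently the series $\sum_{x\in L+\mu}\P_r(x)\,q^{\frac12(x,x)}$ is dominated, term by term, by $\sum_{x\in L+\mu}\exp(-\pi v\,(x,x)_S)$, which converges because $(\,,\,)_S$ is positive definite and $L+\mu$ is a lattice coset; this gives termwise absolute convergence.

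The main obstacle is verifying the inductive incidence conditions (i)--(iii) robustly enough that the spanning and orientation claims hold not just at the vertices $z_I$ but along the entire image of the cube, so that $S$ is a well-defined compact oriented $r$-cycle and part (i) yields a single \emph{transverse} intersection. In the $(n-2,2)$ case this was handled explicitly by the projection formulas $C_{2\perp1}$, etc.; here I would argue inductively, observing that fixing the first coordinate $s_1$ and projecting the remaining data into $V_{B_1(s_1)} = B_1(s_1)^\perp$ reproduces a signature $(n-r,r-1)$ configuration satisfying the analogous incidence relations, so that compactness and the spanning property propagate down the dimension. The convergence argument itself (part (ii)) is then essentially formal and carries no real difficulty, as it only uses compactness of $S$ together with the vanishing of $R(x,z_0)$ at the unique intersection point supplied by part (i).
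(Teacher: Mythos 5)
Your proposal is correct and follows essentially the same route as the paper, which simply states that Lemma~\ref{int-lemma} (the decoupled linear equations $ (1-s_j)(x,C_j)+s_j(x,C_{j'})=0$ with unique solution in $[0,1]$ exactly when each sign factor is nonzero) and the compactness/majorant argument of section~\ref{hol-conv-section} carry over verbatim. Your extra care about the $B_j(s_j^0)$ spanning a negative $r$-plane along the whole cube is a point the paper leaves implicit in its inductive incidence conditions, but it does not change the argument.
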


The theta form 
$$\theta_\mu(\tau,\ph_{KM}) = \sum_{x\in L+\mu} \ph_{KM}(\tau,x)$$
is a closed $r$-form on $D$ and we define
$$I_\mu(\tau;S) = \int_S \theta_\mu(\tau,\ph_{KM}).$$
As before $I_\mu(\tau;S)$ is a non-holomorphic modular form of weight $n/2$ and transformation law inherited from that of 
$\theta_\mu(\tau;\ph_{KM})$. 
We expect that $I_\mu(\tau;S)$ is again the modular completion of the $q$-series (\ref{hol-sig-r}).
A solution of the equation
$$\frac{\d}{\d \bar\tau}\ph_{KM}(\tau,x) = d \Psi_M(\tau,x)$$
is defined in section~8 of \cite{KM.IHES} and the analogue of the form $\psi(x)$ used above can be obtained from it. 
It remains to do the boundary calculations explicitly and to determine the relation with the generalized error 
functions defined in section 6 of \cite{ABMP}.  In particular, it seems likely that there will be an analogue of the 
inductive relation (\ref{inductive-formula}) for the shadow in general. We hope to return to this in the future. 

\section{Appendix:  Differential forms on $D$}

In this section, we review the conventions used above concerning tangent spaces and differential forms. 
The basic idea is to simplify computations by pulling back to the frame bundle and then viewing this bundle as an open subset of $V^2$. 

\subsection{Tangent vectors}\label{subsec-tan-vecs}

We use the following construction.  Let
$$\FD = \{ \, \zeta = [\zeta_1,\zeta_2] \in V^2\mid (\zeta, \zeta)<0\ \}$$
and
$$\OFD = \{ \, \zeta = [\zeta_1,\zeta_2] \in V^2\mid (\zeta, \zeta)=-1_2\ \}$$
be the spaces of oriented negative frames and oriented orthogonal negative frames respectively.
We have natural projections to $D$ sending a frame to the $2$-plane it spans.  
Note that $\FD \subset V^2$ is an open subset so that there is a natural identification $T_\zeta(\FD) = V^2$. 
Also, $\FD$ (resp. $\OFD$)  is a $\GL_2(\R)^+$ (resp. $\SO(2)$) torsor over $D$.
For convenience, for $z\in D$ we write $U(z) = z^\perp$.  If $\zeta\in \FD$ with span $\pi(\zeta)=z$,  we obtain 
an isomorphism
\beq\label{tan.id.FD}
d\pi_{\zeta} : U(z)^2 \isoarrow T_z(D).
\eeq
satisfying the equivariance condition
$$
\xymatrix{ U(z)^2\ar[r]^{d\pi_{\zeta}}\ar[d]^{R_g}& T_z(D) \ar[d]\\
U(z)^2\ar[r]^{d\pi_{\zeta'}}&T_z(D),
}
$$
where $g\in \GL_2(\R)^+$ and $\zeta'= \zeta \,g$. 
Thus, the spaces $U(z)^2$ define a connection on the $\GL_2(\R)^+$ principal bundle $\FD$
and provide a convenient way to describe differential forms on $D$.   
Note that,  for a choice of $\zeta$ with $\pi(\zeta)=z$,  the natural isomorphism
\beq\label{tan.natural}
\Hom(z, z^\perp) \isoarrow T_z(D),
\eeq
can be written as the composition of the isomorphism
$$\Hom(z, z^\perp) \isoarrow U(z)^2, \qquad \l \mapsto [\l(\zeta_1),\l(\zeta_2)],$$
given by the basis $\zeta$, with the map (\ref{tan.id.FD}). 

For a point $\zeta\in \OFD \subset \FD$, we have
$$V^2=T_{\zeta}(\FD) \ \supset \ T_{\zeta}(\OFD) = \{\,\eta=[\eta_1,\eta_2] \mid (\eta,\zeta)+(\zeta,\eta)=0\,\},$$
i.e., $(\eta_1,\zeta_1)=0$, $(\eta_2, \eta_2)=0$, and $(\eta_1,\zeta_2) = -(\eta_2,\zeta_1)$. 
This contains the connection subspace $U(z)^2$ and the additional vertical subspace $\R \,[\zeta_2,-\zeta_1] \subset z^2$. 

Suppose that $\gamma:[0,1]\lra D$ is a smooth curve given explicitly by a lift
$$\gamma(s) = \sspan\{\zeta(s)\}, \qquad \zeta(s) = [\zeta_1(s),\zeta_2(s)],$$
to a smooth curve in $\OFD$, i.e., with $(\zeta(s), \zeta(s))= -1_2$. 
The vector field along the lift $\zeta(s)$ obtained by pushing forward $\frac{d}{ds}$ is given by
$$d\zeta_*(\frac{d}{ds})= [\dot\zeta_1(s), \dot\zeta_2(s)],$$
and the image along $\gamma$ is 
$$d\gamma_*(\frac{d}{ds})=d\pi_{\zeta}( [\dot\zeta_1(s), \dot\zeta_2(s)]).$$
Under the isomorphism (\ref{tan.natural}), this corresponds to the vector
$$\zeta_1\tt \pr_{U(z)}\dot\zeta_1(s) + \zeta_2\tt \pr_{U(z)}\dot \zeta_2(s)\ \in \ z\tt z^\perp,$$
where $\pr_{U(z)}$ is the orthogonal projection of $V$ to $U(z)$. \hfb 
\begin{rem}  There is one subtle point here. The condition $(\zeta(s), \zeta(s))= -1_2$
implies that 
$$0= (\dot\zeta(s),\zeta(s)) + (\zeta(s), \dot\zeta(s)). $$
But the component $(\dot\zeta_1(s),\zeta_2(s))$ need not be zero and so the $\dot\zeta_j(s)$'s may not lie in $U(z)$, 
hence the need for the projection. On the other hand, if we integral $\pi^*(\psi(x))$ along the lift of a curve to $\OFD$, then the pullback 
will vanish on the vertical component of the tangent vector, and so we do not need to take any projection. 
\end{rem}

\begin{rem} It will be convenient to compute differential forms on $D$ by working with their pullbacks to $\OFD$. 
In the end, we 
obtain forms on $D$ by restricting to the connection subspace. This restriction cannot, of course, be done at the various 
intermediate steps!
\end{rem}

\subsection{Proof of Proposition~\ref{dpsi}}\label{subsec-proof-dpsi}

On $\OFD$, we have 
\begin{align}
\psi(x)  &= -\frac{1}{2\pi} e^{-2\pi R}\,\big(\ R^{-1}(\,(x,\zeta_1)(x,d\zeta_2)-(x,\zeta_2)(x,d\zeta_1) \,) - (\zeta_2,d\zeta_1)\ \big).
\end{align}
where we have omitted the $\pi^*$.  Note that since $(\zeta,\zeta)=-1_2$ on $\OFD$, we have 
$$R = (x,\zeta)(\zeta,x),\qquad dR = 2(x,\zeta)(d\zeta,x) = 2(x,\zeta_1)(x,d\zeta_1) + 2 (x,\zeta_2)(x,d\zeta_2),$$
and relations $(\zeta_1,d\zeta_1)=0$, $\zeta_2,d\zeta_2)=0$, and  $(\zeta_2,d\zeta_1) = -(\zeta_1,d\zeta_2)$.
Thus, we have 
$$dR \wedge (\zeta_2,d\zeta_1) = -dR\wedge (\zeta_1,d\zeta_2)=0.$$
Now we calculate
\begin{align*}
d\psi(x)&= e^{-2\pi R}\,[\,R^{-1}+ \frac1{2\pi}\,R^{-2}\,]\,dR \wedge((x,\zeta_1)(x,d\zeta_2)-(x,\zeta_2)(x,d\zeta_1) \,)\\
\nass
{}&\qquad\quad -\frac1{2\pi}\,e^{-2\pi R}\,R^{-1}\,2\, (x,d\zeta_1)\wedge (x,d\zeta_2)\\
\nass
{}&\qquad\qquad \quad -\frac1{2\pi}\,e^{-2\pi R} \,(d\zeta_1,d\zeta_2).
\end{align*}
But we have
\begin{align*}
dR \wedge(\,(x,\zeta_1)(x,d\zeta_2)-(x,\zeta_2)(x,d\zeta_1)\,)&= 2 R\,(x,d\zeta_1)\wedge(x,d\zeta_2).
\end{align*}
so that 
\begin{align*}
d\psi(x)&=2\,\big[\,(x,d\zeta_1)\wedge(x,d\zeta_2) -\frac1{4\pi}\,(d\zeta_1,d\zeta_2)\,\big]\,e^{-2\pi R},
\end{align*}
as claimed. 

\subsection{The generalized error function}
For completeness, in this section, we record the definitions of the generalized error function and its `boosted' version
given in \cite{ABMP}. 

The classical error function and its complementary version, in the normalization of \cite{ABMP}, are given by 
\begin{align*}
E_1(u) & = \sgn(u) \,\Erf(|u|\sqrt{\pi}) = 2 u \int_0^1 e^{-\pi u^2 v^2}\, dv \\
\nass
M_1(u)&= -\sgn(u) \,\Erfc(|u|\sqrt{\pi})= -\frac{2}{\pi} \sgn(u)\int_0^\infty e^{-\pi u^2(v^2+1)}\,(v^2+1)^{-1}\,dv.
\end{align*}
The auxilliary function used to define the generalized error function is then 
\begin{align*}
\nass
e_2(u_1,u_2)&= 2 u_2 \int_0^1 e^{-\pi t^2 u_2^2}\,E_1(t u_1)\,dt \\
\nass
{}&= 4 u_1 u_2 \int_0^1\int_0^1 e^{-\pi t^2( u_2^2 + u_1^2 v^2)}\, t \,dt \,dv\\
\nass
{}&=  \frac{2 u_1u_2}{\pi}\int_0^1 \big(\,1- e^{-\pi (u_2^2+u_1^2 v^2)} \ \big)\,(u_2^2+u_1^2v^2)^{-1}\,dv\\
\nass
{}&= \frac{2}{\pi} \,\Arctan(\frac{u_1}{u_2}) - \frac{2}{\pi} u_2\,e^{-\pi u_2^2}\,\int_0^{u_1} e^{-\pi v^2}\,(u_2^2+v^2)^{-1}\,dv\\
\nass
{}&= \frac{2}{\pi} \,\Arctan(\frac{u_1}{u_2}) - \tilde e_2(u_1,u_2),
\end{align*}
where the last line defines $\tilde e_2(u_1,u_2)$, cf. (\ref{e2-int}).
The generalized error function $E_2$ can be expressed as, cf. (3.24) of \cite{ABMP}, 
\beq\label{our-def-E2}
E_2(\a;u_1,u_2) = -\tilde e_2(u_1,u_2) - \tilde e_2(u_1',u_2') + \sgn(u_2)\,\sgn(u_2'),
\eeq
where
$$u_1'= \frac{u_2-\a u_1}{\sqrt{1+\a^2}}, \qquad u_2'=  \frac{u_1+\a u_2}{\sqrt{1+\a^2}}.$$
It has, of course, a more elegant definition given in section 3.1 of \cite{ABMP}, but (\ref{our-def-E2}) is all we will need. 
Now the `boosted' version, Definition~3.14 of \cite{ABMP}, is defined for a pair of negative vectors $C_1$, $C_2$ 
with $\Delta(C_1,C_2)>0$ and a vector $x\in V$ by
$$E_2(C_1,C_2;x) = E_2\left(\frac{(C_1,C_2)}{\sqrt{\Delta(C_1,C_2)}}; \frac{(C_{1\perp 2},x)}{\sqrt{(C_{1\perp2},C_{1\perp2})}},
\frac{(C_{2},x)}{\sqrt{(C_{2},C_{2})}}\right).$$
After a short calculation using (\ref{our-def-E2}), this yields (\ref{Etoe2}).

\end{document}